\theoremstyle{plain}
\newtheorem{theorem}{Theorem}[section]
\newtheorem{lemma}[theorem]{Lemma}
\newtheorem{proposition}[theorem]{Proposition}
\theoremstyle{definition}
\newtheorem{remark}[theorem]{Remark}
\numberwithin{theorem}{section}
\numberwithin{equation}{section}
\numberwithin{figure}{section}
\def\N{\mathbb N}
\def\R{\mathbb R}
\def\L{\mathscr L}
\def\Prob{\mathcal P}
\def\1{{\bf 1}}
\def\j{\mathrm{j}}
\def\a{\alpha}
\def\b{\beta}
\def\de{\delta}
\def\e{\varepsilon}
\def\l{\lambda}
\def\L{\Lambda}
\def\vphi{\varphi}
\def\nab{\nabla}
\def\t{\theta}
\def\Om{\Omega}
\newcommand{\ep}{\varepsilon}
\renewcommand{\H}{\mathcal{H}}
\newcommand{\V}{\tilde{V}}
\newcommand{\eone}{\text{e}_{1}}
\newcommand{\ent}{\mathrm{Ent}}
\DeclareMathOperator{\Id}{Id}
\DeclareMathOperator{\trace}{tr}
\DeclareMathOperator{\cost}{cost}
\begin{document}
\title[Lipschitz Changes of Variables]{Lipschitz Changes of Variables between\\ Perturbations of Log-concave Measures}

\author[M.\ Colombo]{Maria Colombo}
\address{
Institute for Theoretical Studies, ETH Z\"urich, Clausiusstrasse 47, CH-8092 Z\"urich, Switzerland
\\
Institut f\"ur Mathematik, Universitaet Z\"urich, Winterthurerstrasse 190, CH-8057 Z\"urich,
Switzerland}
\email{maria.colombo@math.uzh.ch}

\author[A.\ Figalli]{Alessio Figalli}
\address{The University of Texas at Austin, 
Mathematics Dept. RLM 8.100,
2515 Speedway Stop C1200,
Austin, TX 78712, USA}
\email{figalli@math.utexas.edu}

\author[Y.\ Jhaveri]{Yash Jhaveri}
\address{The University of Texas at Austin, 
Mathematics Dept. RLM 8.100,
2515 Speedway Stop C1200,
Austin, TX 78712, USA}
\email{yjhaveri@math.utexas.edu}

%
%\keywords{
%\\
%\indent 2010 {\it Mathematics Subject Classification:}

\begin{abstract}
Extending a result of Caffarelli, we provide global Lipschitz changes of variables between compactly supported perturbations of log-concave measures.
The result is based on a combination of ideas from optimal transportation theory and a new Pogorelov-type estimate. In the case of radially symmetric measures, Lipschitz changes of variables are obtained for a much broader class of perturbations.
\\ \\
{\bf Mathematics Subject Classification:} 49Q20, 35J96, 26D10
\end{abstract}

\maketitle

\section{Introduction}
In \cite{c2}, Caffarelli built Lipschitz changes of variables between log-concave probability measures. More precisely, he showed that if $V,W \in C^{1,1}_{\rm loc}(\R^n)$ are convex functions with $D^{2}V(x) \leq \L_{V}\Id$ and $\l_{W}\Id \leq D^{2}W(x)$ for a.e. $x \in \R^{n}$ with $0 < \L_V,\lambda_W < \infty$, then there exists a Lipschitz map $T: \R^n \to \R^n$ such that $T_\# \big( e^{-V(x)} \, dx \big) =  e^{-W(x)} \, dx$ \footnote{Given two finite Borel measures $\mu$ and $\nu$ and a Borel map $T:\R^n \to \R^n$, recall that $T_{\#}\mu = \nu$ if
\[ \int_{\R^{n}} \vphi(y) \, d\nu(y) =  \int_{\R^{n}} \vphi(T(x)) \, d\mu(x) \qquad \forall \,\vphi \text{ Borel and bounded}. \]
} and
\begin{equation}
\label{eq:bound DT}
\| \nab T \|_{L^\infty(\R^{n})} \leq \sqrt{\L_{V}/\l_{W}}.
\end{equation}
The map $T$ is obtained via optimal transportation.
It is the unique solution of the Monge problem for quadratic cost:
\begin{equation*}
\min \bigg\{ \int_{\R^{n}} |x-T(x)|^{2} e^{-V(x)} \, dx: T_\# \big( e^{-V(x)} \, dx \big) =  e^{-W(x)} \, dx\bigg\} 
\end{equation*}
(see Section~\ref{sec:Preliminaries} for more details,
and \cite{KimMil} for a completely different construction of a Lipschitz change of variables in this setting). We note that a particularly important feature of Caffarelli's result is that the bound \eqref{eq:bound DT}
is {\em independent} of the dimension $n$.

A consequence of Caffarelli's result is the possible deduction of certain functional inequalities (such as log-Sobolev or Poincar\'e-type inequalities) for log-concave measures from their corresponding Gaussian versions. 
For instance, denoting the standard Gaussian measure on $\R^n$ by $\gamma_n$, consider the Gaussian log-Sobolev inequality,
$$\int_{\R^n} f^2 \ln f \, d \gamma_n \leq 
\int_{\R^n} |\nabla f|^2 \, d \gamma_n + \bigg( \int_{\R^n} f^2 \, d \gamma_n\bigg) \ln \bigg( \int_{\R^n} f^2 \, d \gamma_n\bigg),$$
which holds for every function $f\in W^{1,2}(\R^n)$. For any measure $\nu$ such that there exists a Lipschitz change of variables between $\nu$ and the Gaussian measure, namely $\nu = T_\# \gamma_n$, we deduce, applying the change of variable formula twice, that
\begin{equation*}
\begin{split}
\int_{\R^n} f^2 \ln f \, d \nu &= 
\int_{\R^n} f(T)^2 \ln f(T) \, d \gamma_n
\\
& \leq 
\int_{\R^n} |\nabla[f\circ T]|^2 \, d \gamma_n + \bigg( \int_{\R^n} f(T)^2 \, d \gamma_n\bigg) \ln \bigg( \int_{\R^n} f(T)^2 \, d \gamma_n\bigg)
\\
& \leq \| \nab T \|_{L^\infty(\R^{n})}^2
\int_{\R^n} |\nabla f (T)|^2 \, d \gamma_n + \bigg( \int_{\R^n} f(T)^2 \, d \gamma_n\bigg) \ln \bigg( \int_{\R^n} f(T)^2 \, d \gamma_n\bigg)
\\
& = \| \nab T \|_{L^\infty(\R^{n})}^2
\int_{\R^n} |\nabla f |^2 \, d \nu + \bigg( \int_{\R^n} f^2 \, d \nu\bigg) \ln \bigg( \int_{\R^n} f^2 \, d \nu\bigg).
\end{split}
\end{equation*}
Therefore, $\nu$ enjoys a log-Sobolev inequality with constant $\| \nab T \|_{L^\infty(\R^{n})}^2$.

Besides the natural consequences described in \cite{c2} and above, Caffarelli's Theorem has found numerous applications in various fields: indeed, it can be used to transfer isoperimetric inequalities, to obtain correlation inequalities, and more (see, for instance, 
\cite{c-e,cor,har,kla}).
\\

In this paper, we extend the result of Caffarelli by building Lipschitz changes of variables between perturbations of $V$ and $W$ that are not necessarily convex.
Perturbations of log-concave measures (in particular, perturbations of Gaussian measures) appear, for instance, in quantum physics as a means to help understanding solutions to physical theories with nonlinear equations of motion.
In cases where an explicit solution is unknown, perturbations of log-concave measures can be used to yield approximate solutions.

We let $\Prob(X)$ denote the space of probability measures on a metric space $X$.
The main result of the paper is the following: 

\begin{theorem}
\label{thm:main}
Let $V \in C^{1,1}_{\rm loc}(\R^{n})$ be such that $e^{-V(x)} \, dx \in \Prob(\R^{n})$.
Suppose that $V(0) = \inf_{\R^{n}} V$ and there exist constants $0 < \l,\, \L < \infty$ for which $\l\Id \leq D^{2}V(x) \leq \L\Id$ for a.e. $x \in \R^{n}$.
Moreover, let $R>0$, $q \in C^{0}_{c}(B_{R})$, and $c_q \in \R$ be such that $e^{-V(x)+c_q-q(x)} \, dx \in \Prob(\R^{n})$.
Assume that $-\l_{q}\Id \leq D^{2} q$ in the sense of distributions for some constant $\l_{q} \geq 0$. 
Then, there exists a constant $C = C(R,\l,\L,\l_{q})>0$, independent of $n$, such that the optimal transport  map $T$ that takes $e^{-V(x)} \, dx$ to $e^{-V(x)+c_q-q(x)} \, dx$ satisfies
\begin{equation}
\label{ts:lip-transp}
\| \nab T\|_{L^{\infty}(\R^{n})} \leq C.
\end{equation}
\end{theorem}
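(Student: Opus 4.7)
The optimal map is $T = \nabla\phi$ for a convex Brenier potential satisfying
\[ \log\det D^{2}\phi(x) = V(\nabla\phi(x)) - V(x) + q(\nabla\phi(x)) - c_{q}. \]
My plan is to bound $\phi_{ee}$ in every unit direction $e$ by a Pogorelov-type maximum principle. First, I would mollify $q$ to produce smooth $q_{\varepsilon}$ supported in a slightly larger ball, still satisfying $D^{2}q_{\varepsilon}\geq -\l_{q}\Id$ (semiconvexity is preserved by convolution). By Caffarelli's regularity theory the associated potentials $\phi_{\varepsilon}$ are smooth and strictly convex, so it suffices to prove the bound in the smooth setting uniformly in $\varepsilon$; the general case then follows by passage to the limit.

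Differentiating the logarithmic Monge-Amp\`ere identity twice in the direction $e$ yields an elliptic identity for $\log\phi_{ee}$ with respect to the linearized operator $L := \phi^{ij}\partial_{ij}$. Caffarelli's original argument applies the maximum principle directly to $\log\phi_{ee}$, which succeeds only when $D^{2}V + D^{2}q \geq 0$, i.e., $\l\geq\l_{q}$. To handle the general case I would introduce an auxiliary function of the form
\[ G(x) = \log\phi_{ee}(x) + \psi(\nabla\phi(x)), \]
where $\psi:\R^{n}\to\R$ is a smooth function depending only on $R, \l, \L, \l_{q}$. Composing with $\nabla\phi$ (rather than with $x$) is essential: a direct calculation shows that at any critical point of $G$ the term $(V_{k}+q_{k})(\nabla\phi)\,\phi_{kee}/\phi_{ee}$ produced by $L[\log\phi_{ee}]$ cancels exactly with the cross term $(V_{l}+q_{l})(\nabla\phi)\,\psi_{k}\phi_{kl}$ coming from $L[\psi\circ\nabla\phi]$, so that no first-order-in-$V$ junk survives.

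Choosing $e$ to be the top eigenvector of $D^{2}\phi$ at the maximum point $x_{0}$ of $G$ removes all traces over the dimension, and after using $\L\Id\geq D^{2}V\geq \l\Id$ together with $D^{2}q\geq -\l_{q}\Id$, the inequality $L[G](x_{0})\leq 0$ reduces, when $\nabla\phi(x_{0})\in B_{R}$, to a pointwise relation of the schematic form
\[ \frac{\L}{\phi_{ee}} + \psi_{k}\psi_{l}\phi_{kl} + \psi_{k}V_{k}(x_{0}) \geq (\l-\l_{q})\phi_{ee} + \psi_{kl}\phi_{kl}, \]
while on $\{\nabla\phi\notin B_{R}\}$ the perturbation $q$ vanishes and Caffarelli's original bound $\phi_{ee}\leq\L/\l$ applies unchanged. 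The main obstacle is to construct $\psi$ so that on $B_{R}$ the second-derivative term $\psi_{kl}\phi_{kl}$ creates enough positivity to absorb the possibly negative quantity $(\l-\l_{q})\phi_{ee}$, while the first-order quantities $\psi_{k}\psi_{l}\phi_{kl}$ and $\psi_{k}V_{k}(x_{0})$ remain subcritical in $\phi_{ee}$. The last requirement is the delicate point, because $|x_{0}|$ is a priori unconstrained even when $\nabla\phi(x_{0})\in B_{R}$, so the gradient of $\psi$ must be tuned to compensate the possibly large $|\nabla V(x_{0})|$.

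Finally, one checks that $G$ attains its supremum. The uniform convexity of $V$ together with the fact that $e^{-V}$ is a probability measure forces $V$ to have at least quadratic growth, hence $|\nabla\phi(x)|\to\infty$ as $|x|\to\infty$; consequently $\psi\circ\nabla\phi$ vanishes outside a large ball, and on the complement the Caffarelli bound $\phi_{ee}\leq\L/\l$ holds. This confines the supremum of $G$ to a bounded region where the Pogorelov argument delivers the desired estimate. The final constant depends only on $R, \l, \L, \l_{q}$, and dimension independence is inherited from the fact that only eigenvalue---never trace---quantities enter the key inequality.
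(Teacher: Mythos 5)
Your overall strategy is the one the paper follows: a Pogorelov-type maximum principle for the auxiliary function $G=\log\phi_{ee}+\psi(\nabla\phi)$ (the paper writes the equivalent $h=\phi_{\alpha\alpha}e^{\psi(\nabla\phi)}$), with $\psi$ radial, $\nabla\psi$ compactly supported, and $\psi=\lambda_q|\cdot|^2/2$ near the origin so that the concavity of $q$ is absorbed by the $\psi_{kl}\phi_{kl}$ term. The first-order cancellation you describe is also what the paper uses. But the proposal contains a genuine gap precisely at the point you flag as ``delicate.'' The surviving term after cancellation is $-\psi_k(\nabla\phi(x_0))V_k(x_0)\,\phi_{ee}(x_0)$, which is \emph{linear} in $\phi_{ee}$, so it is compatible with a quadratic-in-$\phi_{ee}$ good term only if its coefficient is bounded. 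That coefficient involves $|\nabla V(x_0)|\sim\Lambda|x_0|$, and you cannot ``tune $\nabla\psi$'' to compensate: $\psi$ is a fixed function of $y=\nabla\phi(x_0)$, so if $\nabla\phi(x_0)\in B_R$ then $\psi_k(\nabla\phi(x_0))$ is of order $\lambda_q R$ and not at your disposal. The only way forward is to prove that $|x_0|$ is bounded whenever $\nabla\phi(x_0)$ lands in $B_P$, i.e., that the \emph{inverse} transport does not carry points of $B_P$ far away. The paper does this with a separate displacement estimate (Lemma~\ref{lemma:R-image-T-est-V-cut}) that bounds the transport cost via a Talagrand-type inequality and then converts it into a pointwise bound $[\nabla\phi]^{-1}(B_P)\subseteq B_{P'}$. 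This lemma is the missing ingredient in your argument; without it the Pogorelov inequality does not close.

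A second issue is the justification that $G$ attains a finite supremum. You argue that ``Caffarelli's bound $\phi_{ee}\leq\Lambda/\lambda$ applies unchanged'' outside a ball, but Caffarelli's estimate $\|\nabla T\|_{L^\infty}\leq\sqrt{\Lambda/\lambda}$ (note the square root) is a \emph{global} theorem, derived from a maximum that might well sit inside your ball; it cannot be invoked as a local a priori bound on the exterior. The paper instead truncates $V$ to a ball $B_{\j}$, so that Lemma~\ref{lemma:finite max (Gen)} forces the incremental quotients $\phi^{\ep}$ to vanish at infinity; if the continuous function $h$ itself has no interior maximum, the argument passes to the incremental quotients $h^{\ep}$, shows that their maximum point eventually lies outside $B_{P'}$ (so $\psi\circ\nabla\phi$ is locally constant there), and then runs the \emph{local} version of Caffarelli's proof at that point as explained in Remark~\ref{rmk:local}. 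This step cannot be replaced by an appeal to the global Caffarelli estimate.
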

The crucial point here is that the estimate on the Lipschitz constant of the optimal transport map is independent of dimension, as it is in Caffarelli's results for log-concave measures.

In the case of spherically symmetric measures, we are able to weaken the assumptions on both the log-concave measure and its perturbation and still obtain a global Lipschitz change of variables. In particular, the Lipschitz constant is controlled only by the $L^\infty$-norm of the positive and negative parts of the perturbation $q$, denoted by $q^+$ and $q^-$. In the following theorem, we first analyze the $1$-dimensional problem:

\begin{theorem}
\label{1-D noncompact}
Let $V : \R \to \R \cup \{ \infty \}$ be a convex function and $q:\R\to \R$ be a bounded function such that $e^{-V(x)} \,dx,
\, e^{-V(x)-q(x)} \,dx \in \Prob(\R)$. Then, the optimal transport $T$ that takes $e^{-V(x)} \,dx$ to $e^{-V(x)-q(x)} \,dx$ is Lipschitz and satisfies
\begin{equation}
\label{ts: T-lip-q}
\|\log T'\|_{L^{\infty}(\R)} \leq \|q^{+}\|_{L^{\infty}(\R)}+\|q^{-}\|_{L^{\infty}(\R)}.
\end{equation}
\end{theorem}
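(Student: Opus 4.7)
The plan is to exploit the one-dimensional nature of the problem. The optimal transport $T$ from $e^{-V}\,dx$ to $e^{-V-q}\,dx$ is the unique monotone rearrangement, characterized by the CDF identity $F(x)=G(T(x))$, where $F$ and $G$ denote the cumulative distribution functions of the two measures. Since $V$ is continuous on the interior of $\{V<\infty\}$ and $q$ is bounded on $\R$, the two densities are strictly positive on a common interval, and differentiating the CDF identity there gives, almost everywhere,
\[
\log T'(x)=V(T(x))-V(x)+q(T(x)).
\]
Because $q(T(x))\in[-\|q^{-}\|_{L^{\infty}(\R)},\|q^{+}\|_{L^{\infty}(\R)}]$ by inspection, the desired bound \eqref{ts: T-lip-q} will follow from the sharper two-sided estimate
\[
-\|q^{+}\|_{L^{\infty}(\R)}\le V(T(x))-V(x)\le\|q^{-}\|_{L^{\infty}(\R)},
\]
which isolates the $V$-difference from the explicit $q$-term and is the real content.

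I would prove the upper inequality by contradiction: fix $x$, set $y:=T(x)$ and $C:=V(y)-V(x)$, and suppose $C>\|q^{-}\|_{L^{\infty}(\R)}$. Consider first the case $y>x$ and write $L:=y-x$. The mass-balance identity $\int_x^\infty e^{-V}\,dz=\int_y^\infty e^{-V-q}\,dz$ rearranges to
\[
\int_x^y e^{-V}\,dz=\int_y^\infty e^{-V}(e^{-q}-1)\,dz\le\bigl(e^{\|q^{-}\|_{L^{\infty}(\R)}}-1\bigr)\int_y^\infty e^{-V}\,dz.
\]
Convexity of $V$ now provides matched one-sided estimates. On $[x,y]$, $V$ lies below the secant through $(x,V(x))$ and $(y,V(y))$, so
\[
\int_x^y e^{-V}\,dz\ge e^{-V(x)}\frac{L}{C}\bigl(1-e^{-C}\bigr);
\]
past $y$, since $V'(y^+)\ge C/L$ by convexity, one has
\[
\int_y^\infty e^{-V}\,dz\le e^{-V(y)}\frac{L}{C}.
\]
Plugging these into the mass-balance inequality and using $V(x)-V(y)=-C$ collapses everything to $e^C-1\le e^{\|q^{-}\|_{L^{\infty}(\R)}}-1$, contradicting $C>\|q^{-}\|_{L^{\infty}(\R)}$. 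The case $y<x$ is handled identically using the left-tail mass balance $\int_{-\infty}^x e^{-V}=\int_{-\infty}^y e^{-V-q}$ and the corresponding one-sided convexity bounds, and the lower inequality $V(T(x))-V(x)\ge-\|q^{+}\|_{L^{\infty}(\R)}$ follows by the same scheme with the roles of the two tails swapped and $1-e^{-q}\le 1-e^{-\|q^{+}\|_{L^{\infty}(\R)}}$ in place of $e^{-q}-1\le e^{\|q^{-}\|_{L^{\infty}(\R)}}-1$.

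The main obstacle is to spot the correct decomposition: it is not at all obvious a priori that the $V$-difference appearing in the formula for $\log T'$ can be estimated separately from the $q$-term and is controlled by a one-sided norm of $q$ of opposite sign. Once this is in place, the convexity of $V$ provides exactly the right amount of tail control, with the exponential $e^C$ produced by the secant-line estimate matching the exponential $e^{\|q^{-}\|_{L^{\infty}(\R)}}$ arising from the pointwise bound on $q$ so as to yield the sharp $C\le\|q^{-}\|_{L^{\infty}(\R)}$. Minor regularity issues—identifying $\log T'$ almost everywhere when $V$ is merely convex and $q$ is merely bounded—are standard and can be dispatched by mollification.
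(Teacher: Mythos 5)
Your proof is correct, and it identifies the same key intermediate estimate as the paper, namely \eqref{eqn:Vx-VTx}: $-\|q^{+}\|_{L^{\infty}(\R)} \le V(T(x)) - V(x) \le \|q^{-}\|_{L^{\infty}(\R)}$; but the way you establish it is genuinely different. The paper proves a monotonicity lemma (Lemma~\ref{lemma:log-V}): with $\Phi$ the CDF of $e^{-V}\,dx$, $\Psi=1-\Phi$, and $x_0$ the minimizer of $V$, the function $\log\Phi+V$ is nondecreasing on $(-\infty,x_0]$ and $\log\Psi+V$ is nonincreasing on $[x_0,\infty)$; it then combines this with the mass-balance bounds $-\|q^+\|_{L^\infty(\R)}\le\log\Phi(x)-\log\Phi(T(x))\le\|q^-\|_{L^\infty(\R)}$ (and the $\Psi$-analogue) and a case analysis on the positions of $x$ and $T(x)$ relative to $x_0$. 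You instead set $C:=V(T(x))-V(x)$, assume $C>\|q^-\|_{L^\infty(\R)}$, and play two matched convexity estimates off each other in the tail mass balance: a secant lower bound on $\int_x^{T(x)} e^{-V}$ and a support-line upper bound on $\int_{T(x)}^\infty e^{-V}$, whose common prefactor $L/C$ cancels to give the sharp contradiction $e^C\le e^{\|q^-\|_{L^\infty(\R)}}$. Your case split is on the sign of $T(x)-x$ rather than on the position relative to $x_0$, and you avoid introducing $\Phi$, $\Psi$, and $x_0$ altogether. I verified that the three remaining sub-cases, which you only sketch, do close with the same cancellation: when $T(x)<x$ one uses the left-tail mass balance and the left one-sided derivative of $V$ at $T(x)$, and for the lower bound one uses the reformulation $\int_x^{T(x)} e^{-V-q}=\int_{-\infty}^x e^{-V}(1-e^{-q})$ together with $e^{-q}\ge e^{-\|q^+\|_{L^\infty(\R)}}$. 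Both routes are elementary and of comparable length; yours is somewhat more self-contained, while the paper's monotonicity lemma has some independent interest. Do spell out the other sub-cases, since the choice of tail and the sign of the relevant one-sided derivative require care.
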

We remark that while the map $T$ in Theorem~\ref{1-D noncompact} is only unique up to sets of $e^{-V(x)}\,dx$-measure zero, arguing by approximation, we can find a particular transport $T$ for which the estimate on $\log T'$ in \eqref{ts: T-lip-q} is satisfied almost everywhere in $\R$.
Applying this $1$-dimensional result to radially symmetric densities, we obtain the following:

\begin{theorem}\label{thm:sym}
Let $V : \R^{n} \to \R \cup \{ \infty \}$ be a convex, radially symmetric function and $q:\R^n \to\R$ be a bounded, radially symmetric function such that $e^{-V(x)} \,dx,\, e^{-V(x)-q(x)} \,dx \in \Prob(\R^{n})$. Then, the optimal transport $T$ that takes $e^{-V(x)} \,dx$ to $e^{-V(x)-q(x)} \,dx$ is Lipschitz and satisfies
\begin{equation}
\label{ts: S-lip}
e^{-\|q^{+}\|_{L^{\infty}(\R^{n})}-\|q^{-}\|_{L^{\infty}(\R^{n})}}\Id \leq \nab T(x) \leq e^{\|q^{+}\|_{L^{\infty}(\R^{n})}+\|q^{-}\|_{L^{\infty}(\R^{n})}}\Id \qquad \text{for a.e. } x \in \R^{n}.
\end{equation}
\end{theorem}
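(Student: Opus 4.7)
The plan is to exploit radial symmetry to reduce to the one-dimensional setting of Theorem~\ref{1-D noncompact}.

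\emph{Reduction to one dimension.} I would write $V(x) = v(|x|)$ and $q(x) = Q(|x|)$, with $v:[0,\infty)\to\R\cup\{\infty\}$ convex and $Q:[0,\infty)\to\R$ bounded. Since both the source and target measures are invariant under the orthogonal group, uniqueness of the Brenier optimal transport forces $T$ to commute with rotations, so $T(x) = t(|x|)\,x/|x|$ for some monotone nondecreasing $t:[0,\infty)\to[0,\infty)$. Integrating over spheres identifies $t$ with the monotone optimal transport on the half-line between the probability measures proportional to $r^{n-1}e^{-v(r)}\,dr$ and $r^{n-1}e^{-v(r)-Q(r)}\,dr$.

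\emph{Applying the one-dimensional result.} I would set
\[ \tilde V(r) := v(r) - (n-1)\log r + c_n \quad\text{for } r>0, \qquad \tilde V(r) := +\infty \text{ for } r\leq 0, \]
with $c_n$ chosen so that $e^{-\tilde V(r)}\,dr$ is the first of the two probabilities above, and extend $Q$ by zero to $(-\infty,0)$. Since $v$ and $r\mapsto -(n-1)\log r$ are both convex on $(0,\infty)$, $\tilde V:\R\to\R\cup\{\infty\}$ is convex, and the two one-dimensional measures take the form $e^{-\tilde V(r)}\,dr$ and $e^{-\tilde V(r)-Q(r)}\,dr$ required by Theorem~\ref{1-D noncompact}. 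That theorem then yields
\[ \|\log t'\|_{L^\infty(\R)}\leq K, \qquad K:=\|q^+\|_{L^\infty(\R^n)}+\|q^-\|_{L^\infty(\R^n)}. \]
The substantive thing to verify here --- and what I expect to be the only real subtlety --- is that the dimensional shift $-(n-1)\log r$ can be safely absorbed into $\tilde V$. This is exactly the point where the structure of Theorem~\ref{1-D noncompact} is crucial: its Lipschitz bound depends only on the perturbation and not on the log-concave potential, so the (potentially large, $n$-dependent) shift does not enter the estimate.

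\emph{From $t$ back to $\nab T$.} Differentiating $T(x)=t(|x|)\,x/|x|$ gives
\[ \nab T(x) = t'(r)\,\frac{x\otimes x}{r^2} + \frac{t(r)}{r}\bigg(\Id - \frac{x\otimes x}{r^2}\bigg), \qquad r=|x|, \]
so the eigenvalues of $\nab T(x)$ are $t'(r)$ (simple) and $t(r)/r$ (with multiplicity $n-1$). The radial eigenvalue is directly controlled by the bound above. For the tangential one, note that both one-dimensional measures have their supports starting at $0$, so the monotone rearrangement satisfies $t(r)\to 0$ as $r\to 0^+$; hence
\[ \frac{t(r)}{r} = \frac{1}{r}\int_0^r t'(s)\,ds \in [e^{-K},e^K], \]
and \eqref{ts: S-lip} follows.
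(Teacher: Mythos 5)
Your proof is correct and follows essentially the same route as the paper: reduce to the half-line problem for $r^{n-1}e^{-v(r)}\,dr$, absorb the $-(n-1)\log r$ term into a convex $\tilde V$, apply Theorem~\ref{1-D noncompact}, and control the tangential eigenvalue $t(r)/r$ by the average of $t'$. The only cosmetic differences are that you justify the radial ansatz by uniqueness plus rotation invariance while the paper verifies directly that $T(x)=\tilde T(|x|)x/|x|$ is the gradient of the convex function $\tilde\phi(|x|)$, and you make the normalization constant $c_n$ explicit (a small point the paper glosses over), neither of which changes the substance.
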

Note that the assumption $e^{-V(x)-q(x)} \,dx \in \Prob(\R^{n})$ in Theorems~\ref{1-D noncompact} and \ref{thm:sym}, unlike in Theorem~\ref{thm:main}, is nonrestrictive.
Since $q$ is not required to be compactly supported, the normalization constant making $e^{-V(x)-q(x)} \,dx$ a probability measure if it were not already can simply be absorbed into $q$.

We further remark that the $1$-dimensional estimate in Theorem \ref{1-D noncompact} is false in higher dimensions when one does not assume that the densities are radially symmetric. More precisely, taking the reference measure $e^{-V(x)} \, dx$ to be the standard Gaussian measure, the estimate 
\begin{equation}
\label{eqn:ts-linearized-n}
\|D^2\phi-\Id\|_{L^{\infty}(\R^n)} \leq C\|q\|_{L^{\infty}(\R^n)}
\end{equation}
cannot be true for $n > 1$ (see Remark~\ref{rmk:lin-est} to understand the relationship between \eqref{ts: T-lip-q} and \eqref{eqn:ts-linearized-n} for $n=1$). This is manifest if we recall that the Monge-Amp\`{e}re equation linearizes to the Poisson equation, which does not enjoy $C^{1,1}_{\rm{loc}}$ estimates for bounded right-hand side. In other words, given $V$ and $q$ to be chosen, letting $\phi_{\ep}$ be the potential such that $\nab \phi_{\ep}$ takes $e^{-V(x)} \, dx$ to $e^{-V(x)-\ep q(x)} \, dx$ (for simplicity, we omit the normalization constant that makes $e^{-V(x)-\ep q(x)} \, dx$ a probability measure) and setting $\psi_{\ep}(x) = (\phi_{\ep}(x)-|x|^{2}/2)/\ep$, we have that
\[  \Delta \psi_{\ep}+O(\ep) %= \frac{\log\det (\Id + \ep \nab^{2}\psi_\ep(x))}{\ep} 
= \frac{\log\det \nab^{2}\phi_{\ep}}{\ep} 
=\frac{-V+V(\nab \phi_{\ep})+\ep q(\nabla \phi_{\ep})}{\ep} 
= \langle x, \nabla \psi_{\ep} \rangle +q(\nab \phi_{\ep}) +O(\ep) \]
for every $\ep>0$.
The estimate \eqref{eqn:ts-linearized-n} implies that $\sup_{\ep > 0} \|D^{2}\psi_{\ep}\|_{L^{\infty}(\R^{n})} < \infty$ and, therefore, the existence of a $C^{1,1}_{\rm{loc}}$ solution to the Poisson equation with bounded right-hand side, an impossibility in higher dimensions.

Although this heuristic argument is convincing, the details of the proof are rather delicate, and we give them in the Appendix for completeness. \\

\noindent{\bf Acknowledgments.} M. Colombo acknowledges the support
of the {Gruppo Nazionale per l'Analisi Matematica, la Probabilit\`a e le loro Applicazioni (GNAMPA)} of the {Istituto Nazionale di Alta Matematica (INdAM)}. 
A. Figalli has been partially supported 
by NSF Grant DMS-1262411 and NSF Grant DMS-1361122.
Y. Jhaveri would like to thank Pablo Stinga for helpful conversations.
Part of this work was done while the authors were guests of the FIM at ETH Z\"urich in the Fall of 2014; the hospitality of the Institute is gratefully acknowledged.

\section{Preliminaries}
\label{sec:Preliminaries}
We begin with some preliminaries on optimal transportation and the Monge-Amp\`{e}re equation, and we fix some notation. 

Let $\mu, \nu \in \Prob(\R^{n})$. The Monge optimal transport problem for quadratic cost consists of finding the most efficient way to take $\mu$ to $\nu$ given that the transportation cost to move from a point $x$ to a point $y$ is $|x-y|^{2}$. Hence, one is led to minimize
\begin{equation*}
\label{eqn:MOT}
\cost(T):= \int_{\R^{n}} |x-T(x)|^{2} \, d\mu(x)
\end{equation*}
among all maps $T$ such that $T_{\#}\mu = \nu$. A relaxed formulation of Monge's problem, due to Kantorovich, is to minimize
\begin{equation*}
\label{eqn:KOT}
\int_{\R^{n} \times \R^{n}} |x-y|^{2} \, d\pi(x,y)
\end{equation*}
among all transport plans $\pi$, namely the measures $\pi \in \Prob(\R^{n} \times \R^{n})$ whose marginals are $\mu$ and $\nu$. By a classical theorem of Brenier \cite{b}, the existence and uniqueness of an optimal transport plan are guaranteed when $\mu$ is absolutely continuous and $\mu$ and $\nu$ have finite second moments. Additionally, the optimality of a transport plan $\pi$ is equivalent to $\pi = (\Id \times \nab \phi)_{\#}\mu$ where $\phi$ is a convex function, often called the potential associated to the optimal transport. As a consequence, it follows that in the Monge problem, unique optimal maps exist as gradients of convex functions.
\begin{theorem}
\label{thm:Brenier}
Let $\mu, \nu \in \Prob(\R^{n})$ such that $\mu = f(x) \, dx$ and
\[ \int_{\R^{n}} |x|^{2} \, d\mu(x) + \int_{\R^{n}} |y|^{2} \, d\nu(y) < \infty. \]
Then, there exists a unique (up to sets of $\mu$-measure zero) optimal transport $T$ taking $\mu$ to $\nu$. Moreover, there is a convex function $\phi : \R^{n} \to \R$ such that $T = \nab \phi$.
\end{theorem}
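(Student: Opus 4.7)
The plan is to follow the classical Kantorovich duality approach. First, I would pass to the relaxed problem of minimizing $\int |x-y|^{2} \, d\pi(x,y)$ over transport plans $\pi \in \Prob(\R^{n} \times \R^{n})$ with marginals $\mu$ and $\nu$. The admissible set is nonempty (the product measure works) and, thanks to the finite second-moment assumption, the cost is finite there. Standard weak-$*$ compactness of plans with fixed marginals combined with lower semicontinuity of $\pi \mapsto \int |x-y|^{2} \, d\pi$ yields an optimizer $\bar\pi$. Since $|x-y|^{2} = |x|^{2} + |y|^{2} - 2\langle x,y\rangle$ and the $|x|^{2}$ and $|y|^{2}$ terms depend only on the fixed marginals, minimizing the quadratic cost is equivalent to maximizing $\int \langle x,y\rangle \, d\pi$.

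The key step is to show that $\supp(\bar\pi)$ is cyclically monotone: for any finite collection $(x_{i},y_{i})_{i=1}^{N} \subset \supp(\bar\pi)$ and any permutation $\sigma$,
\[
\sum_{i=1}^{N} \langle x_{i}, y_{i}\rangle \;\geq\; \sum_{i=1}^{N} \langle x_{i}, y_{\sigma(i)}\rangle.
\]
This is obtained by a standard swapping argument: if the inequality failed at some cycle, one could locally reroute a small amount of mass along that cycle to produce a competitor $\pi'$ with strictly smaller cost, contradicting optimality. Then I would invoke Rockafellar's theorem, which states that any cyclically monotone set in $\R^{n} \times \R^{n}$ is contained in the graph of the subdifferential of some proper lower semicontinuous convex function $\phi:\R^{n} \to \R \cup \{+\infty\}$, built explicitly by taking suprema of telescoping inner-product sums along cycles based at a fixed point of $\supp(\bar\pi)$.

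Now I would exploit the absolute continuity $\mu = f(x)\,dx$. Convex functions are locally Lipschitz on the interior of their domain, hence differentiable Lebesgue-a.e. by Rademacher's theorem, and therefore $\mu$-a.e. At such points $x$, the subdifferential $\partial \phi(x)$ is the singleton $\{\nab\phi(x)\}$, so the slices of $\supp(\bar\pi)$ over $\mu$-a.e. $x$ collapse to a single point. This forces $\bar\pi = (\Id \times \nab\phi)_{\#}\mu$, and taking the second marginal yields $(\nab\phi)_{\#}\mu = \nu$; the map $T := \nab\phi$ is the desired Brenier map. For uniqueness, if $T_{1} = \nab\phi_{1}$ and $T_{2} = \nab\phi_{2}$ were two optimal transports, then the plan $\tfrac12\big((\Id\times T_{1})_{\#}\mu + (\Id \times T_{2})_{\#}\mu\big)$ would also be optimal, hence concentrated on a cyclically monotone set, which on a set of positive $\mu$-measure is incompatible with two distinct gradients of convex functions; thus $T_{1} = T_{2}$ $\mu$-a.e.

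The main obstacle is the passage from the existence of $\bar\pi$ to its concentration on a graph. This rests on two nontrivial ingredients: Rockafellar's identification of cyclically monotone sets with subdifferentials of convex functions, and the combination of Rademacher's theorem with the absolute continuity of $\mu$ to make the subdifferential single-valued $\mu$-a.e. Once these are granted, the deduction of the map structure and the uniqueness argument are straightforward.
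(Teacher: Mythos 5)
The paper does not actually prove this statement: Theorem~\ref{thm:Brenier} appears in the Preliminaries as a citation to Brenier's original paper, so there is no in-text proof to compare against. Your argument is a correct rendition of the standard modern route (Kantorovich relaxation, cyclical monotonicity of the support of the optimal plan, Rockafellar's theorem, and Rademacher differentiability combined with absolute continuity of $\mu$), which is the McCann-style proof rather than Brenier's original polar-factorization/duality argument; under the stated finite-second-moment hypotheses the two are essentially interchangeable. The only points you gloss over, both standard and fixable, are (i) that the convex potential $\phi$ produced by Rockafellar's theorem may take the value $+\infty$, so one must check that $\mu$ gives full measure to the interior of $\{\phi<\infty\}$ before invoking local Lipschitzness and Rademacher (this follows because the boundary of a convex set is Lebesgue-null, $\mu\ll\mathcal L^n$, and the disintegration of $\bar\pi$ over points where $\partial\phi=\emptyset$ must vanish), and (ii) the swapping argument for cyclical monotonicity of $\supp(\bar\pi)$ needs a genuine measure-theoretic construction of the competitor plan, not just a pointwise reroute; these details are in Villani's book and do not affect the soundness of the outline.
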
 

A direct consequence of Brenier's characterization of optimal transports as gradients of convex functions is that
\begin{equation}
\label{eqn:mono}
\langle x-y, T(x)-T(y) \rangle \geq 0 \qquad \text{for a.e. } x,y \in \R^{n},
\end{equation}
which follows immediately from the monotonicity of gradients of convex functions.

Suppose now that $\mu = f(x) \, dx$ and $\nu = g(y) \, dy$, and let $\phi$ be a convex function such that $T = \nab \phi$ for $T$ the optimal transport that takes $\mu$ to $\nu$. Assuming that $T = \nab \phi$ is a smooth diffeomorphism, the standard change of variables formula implies that
\[ f(x) = g(T(x))\det \nab T(x). \]
Hence, assuming that $g > 0$, we see that $\phi$ is a solution to the Monge-Amp\`{e}re equation
\[ \det D^{2}\phi = \frac{f}{g \circ \nab \phi}. \]

This formal link between optimal transportation and Monge-Amp\`{e}re (since, to deduce the above equation, we assumed that $T$ was already smooth) is at the heart of the regularity of optimal transport maps (see, for instance, \cite{dpf}
for more details).
In particular, Caffarelli showed the following in \cite{c1}
(see also \cite[Theorem 4.5.2]{fbook}):
\begin{theorem}
\label{thm:Caff-reg}
Let $X,\,Y \subset \R^{n}$ be bounded open sets, and $f : X \to \R^{+}$ and $g : Y \to \R^{+}$ be probability densities locally bounded away from zero and infinity.
If $Y$ is convex, then for any set $X'\subset \subset X$, the optimal transport $T = \nab \phi : X \to Y$ between $f(x)\, dx$ and $g(y)\, dy$ is of class $C^{0,\a}(X')$ for some $\a > 0$.
In addition, if $f \in C^{k,\b}_{\rm{loc}}(X)$ and $g \in C^{k,\b}_{\rm{loc}}(Y)$ for some $k\in \N \cup\{ 0\}$ and $\beta \in (0,1)$, then $\phi \in C^{k+2,\b}_{\rm{loc}}(X)$.
\end{theorem}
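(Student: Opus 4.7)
The plan is to follow Caffarelli's original two-step strategy from \cite{c1}: first establish strict convexity of the Brenier potential $\phi$, then bootstrap via the Monge--Amp\`ere equation.

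For the first step, I would prove that $\phi$ is strictly convex on $X$. Suppose, toward a contradiction, that there is a point $x_{0} \in X$ and a supporting affine function $\ell$ for $\phi$ at $x_{0}$ whose contact set $\Sigma := \{ \phi = \ell \}$ is nontrivial and intersects $X$ in more than a single point. Since $\nabla \phi$ pushes $f(x)\,dx$ onto $g(y)\,dy$ with $\supp g \subset \overline{Y}$, the Monge--Amp\`ere measure of $\phi$ is concentrated inside $\overline{Y}$. Using that $f,g$ are locally bounded away from zero and infinity and that $Y$ is convex, one analyzes the shape of sections $S_{t}(x_{0}) := \{ \phi < \ell + t \}$ via John's lemma and affine renormalization. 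The convexity of $Y$ rules out the presence of a ``dimple'': if $\Sigma$ contained a nontrivial segment with one extremal point in the interior of $X$, then the subdifferential $\partial \phi(\Sigma)$ would necessarily miss a ball while touching extreme points of $\overline{Y}$, contradicting mass balance $f\,dx = (\nabla\phi)^{\#}(g\,dy)$. This produces strict convexity of $\phi$ on $X'$.

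For the second step, strict convexity lets the sections $S_{t}(x)$ be compactly contained in $X$ for $x \in X'$ and $t$ small, and they satisfy the engulfing property and Alexandrov volume estimates $|S_{t}(x)| \simeq t^{n/2}$. Caffarelli's interior $C^{1,\alpha}$ estimate for solutions of $\lambda \le \det D^{2}\phi \le \Lambda$ then gives $\phi \in C^{1,\alpha}(X')$, equivalently $T = \nabla\phi \in C^{0,\alpha}(X')$. For the higher regularity statement, one observes that since $T$ is continuous and $g \in C^{k,\beta}_{\rm loc}(Y)$, the function $g \circ \nabla\phi$ inherits $C^{0,\alpha}$ regularity, so
\[
\det D^{2}\phi(x) = \frac{f(x)}{g(\nabla\phi(x))}
\]
has a $C^{0,\alpha}$ right-hand side bounded away from zero and infinity. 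Applying Caffarelli's $W^{2,p}$ estimate upgrades $\phi$ to $W^{2,p}$ locally, then Evans--Krylov (or equivalently Caffarelli's $C^{2,\alpha}$ estimate for Monge--Amp\`ere with H\"older data) yields $\phi \in C^{2,\alpha}_{\rm loc}(X)$. Once $D^{2}\phi$ is continuous and positive definite, the equation becomes uniformly elliptic with $C^{k,\beta}$ coefficients, and the standard Schauder bootstrap for fully nonlinear uniformly elliptic equations produces $\phi \in C^{k+2,\beta}_{\rm loc}(X)$.

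The main obstacle is the strict convexity step: without the convexity of $Y$, $\phi$ can fail to be strictly convex even for smooth, positive densities, and the whole regularity scheme collapses. The geometric exclusion of dimples via the convexity of $Y$ and the mass-balance condition is the delicate part of the argument; once strict convexity is in hand, the rest is a relatively mechanical application of affine invariance of the Monge--Amp\`ere operator and standard elliptic bootstrap.
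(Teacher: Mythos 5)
The paper does not prove this theorem; it states it as a known result of Caffarelli, citing \cite{c1} and \cite[Theorem 4.5.2]{fbook}, and then uses it as a black box. Your outline correctly identifies the two-step strategy behind Caffarelli's interior regularity theory: strict convexity of the Brenier potential, followed by interior H\"older/Schauder estimates for the Monge--Amp\`ere equation and bootstrapping.

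One substantive imprecision in your sketch: the convexity of $Y$ is not used to exclude ``dimples'' by showing that the subdifferential of $\phi$ over the contact set would ``miss a ball while touching extreme points of $\overline Y$.'' Its role is more basic and comes one step earlier. Convexity of $Y$ guarantees $\partial\phi(x)\subset\overline Y$ for every $x\in X$, which is what allows one to identify the Monge--Amp\`ere measure $\mu_\phi$ of the Brenier potential with $(f/(g\circ\nabla\phi))\,dx$ on $X$, i.e.\ to conclude that $\phi$ is a genuine Alexandrov solution with a right-hand side locally bounded away from zero and infinity. Without convexity of $Y$ the measure $\mu_\phi$ can acquire a singular part, and Caffarelli's well-known counterexample (a ball as source, two half-balls slightly separated as target) shows $T$ can then fail to be continuous. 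Once $\phi$ is known to be an Alexandrov solution, strict convexity is obtained by the Alexandrov volume estimate applied to normalized sections $S_t(x_0)$ around a putative exposed point of the contact set lying in the interior of $X$, not by the subdifferential-image argument you describe. Your $C^{1,\alpha}$ and bootstrap steps are then correct in substance (modulo the minor point that $g\circ\nabla\phi$ is only $C^{0,\alpha\beta}$, not $C^{0,\alpha}$, which does not affect the conclusion), though the detour through $W^{2,p}$ and Evans--Krylov is unnecessary: Caffarelli's interior $C^{2,\alpha}$ estimate for Monge--Amp\`ere with H\"older data applies directly, after which standard Schauder theory for the now uniformly elliptic equation gives $C^{k+2,\beta}_{\mathrm{loc}}$.
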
 

As mentioned in \cite{adm}, Caffarelli's regularity result on optimal transports can be extended to the case where $f$ and $g$ are defined on all of $\R^{n}$ and assumed to be locally bounded away from zero and infinity.
Lastly, we note that optimal transport maps are stable under approximation (see \cite{v}). In particular, let $f_{j}$ and $g_{j}$ be locally uniformly bounded probability densities such that $f_{j} \to f$ and $g_{j} \to g$ in $L^{1}_{\rm{loc}}$. Then, the associated potentials $\phi_{j} \to \phi$ locally uniformly and $\nab\phi_{j} \to \nab\phi$ in measure.

We fix the following additional notation:

\[ \begin{array}{ll}
B_{R} &\text{ball of radius $R$ centered at the origin} \\
\mathcal{L}^{n} &n\text{-dimensional Lebesgue measure} \\
\H^{d} &d\text{-dimensional Hausdorff measure} \\
\mathbb{S}^{n-1} &\text{unit sphere in }\R^{n} \\
\omega_{n} &n\text{-dimensional Lebesgue measure of }B_{1} \subset \R^{n} \\
%\sigma_{n-1} &(n-1)\text{-dimensional Hausdorff measure of }\mathbb{S}^{n-1} \subset \R^{n} \\
\end{array} \]

\section{Lipschitz Changes of Variables between Log-concave Measures}
\label{sec:Caffarelli}

We begin with two useful results of Caffarelli (see \cite{c2}).
They provide some motivation, and we briefly recall their proofs both for completeness and because we shall need them later.

\begin{lemma}
\label{lemma:finite max (Gen)}
Let $\mu = f(x) \, dx,\, \nu = g(x) \, dx \in \Prob(\R^{n})$ with finite second moments and $\nab \phi = T$ be the optimal transport taking $\mu$ to $\nu$.
Assume that $\log f \in L^\infty_{\rm loc}(\R^n)$ and that $g$ is bounded away from zero in the ball $B_{\j}$ for some $\j > 0$ and vanishes outside $B_{\j}$. 
Then,
$$
T(x) \to \j \frac{x}{|x|}\qquad \text{uniformly as }|x| \to\infty.
$$
In particular, for any fixed $\ep > 0$ and for all $\a \in \mathbb{S}^{n-1}$, the function $\phi(x+\e\a)+\phi(x-\e\a)-2\phi(x) \to 0$ as $|x| \to \infty$.
\end{lemma}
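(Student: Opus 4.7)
The plan is to combine two ingredients: the a priori bound $|T(x)|\le\j$ for $\mu$-a.e.\ $x$ (since $T_\#\mu=\nu$ is supported in $\overline{B_\j}$) and the cyclical monotonicity \eqref{eqn:mono} inherited from $T=\nab\phi$. The heuristic is that as $|x|\to\infty$ the direction $x/|x|$ dominates the inner product in \eqref{eqn:mono}, forcing $T(x)$ to be the point of $\overline{B_\j}$ maximizing $\langle x/|x|,\,\cdot\,\rangle$, namely $\j x/|x|$.

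First I would establish sequential convergence: given any sequence $x_k$ with $|x_k|\to\infty$, extract a subsequence along which $x_k/|x_k|\to e\in\mathbb{S}^{n-1}$ and $T(x_k)\to y^*\in\overline{B_\j}$ (possible since $|T|\le\j$ a.e.). For any $z\in B_\j$ and $\delta>0$, positivity of $g$ on $B_\j$ gives $\nu(B_\delta(z))>0$, hence $\mu(T^{-1}(B_\delta(z)))>0$, so there exists $y=y_{z,\delta}$ with $|T(y)-z|<\delta$. Applying \eqref{eqn:mono} to the pair $(x_k,y)$, dividing by $|x_k|$, and letting $k\to\infty$ with $y$ fixed yields
\[ \langle e,\; y^*-T(y)\rangle \ge 0. \]
Sending $\delta\to 0$ and then supremizing over $z\in B_\j$ forces $\langle e,y^*\rangle\ge\j$, so $y^*=\j e$. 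Uniform convergence follows by a standard contradiction argument: if there were $\delta_0>0$ and a sequence $x_k$ with $|x_k|\to\infty$ but $|T(x_k)-\j x_k/|x_k||\ge\delta_0$, passing to a subsequence as above would produce $y^*=\j e$, contradicting the lower bound.

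For the \emph{in particular} assertion, I would use that convex functions are locally Lipschitz, so by the fundamental theorem of calculus (applied to $t\mapsto\phi(x+t\a)$, differentiable for a.e.\ $t$ with derivative $T(x+t\a)\cdot\a$),
\[ \phi(x+\e\a)+\phi(x-\e\a)-2\phi(x) \;=\; \int_0^\e \bigl[T(x+t\a)-T(x-t\a)\bigr]\cdot\a\,dt \]
for a.e.\ $x\in\R^n$. As $|x|\to\infty$ with $\e,\a$ fixed, the norms $|x\pm t\a|$ tend to infinity and the directions $(x\pm t\a)/|x\pm t\a|$ converge to $x/|x|$, uniformly in $t\in[0,\e]$. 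The uniform convergence from the first part then forces the integrand to zero uniformly in $t$, yielding the claim; the extension from a.e.\ $x$ to every $x$ follows from the continuity of $\phi$.

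The main technical obstacle lies in the fact that $T$ is only defined almost everywhere and cannot be prescribed at individual points: one cannot enforce $T(y)=z$ exactly, so the comparison in the monotonicity step must be done through approximation, which is precisely where the lower bound on $g$ in $B_\j$ enters, producing the comparison points $y=y_{z,\delta}$ that make the argument go through.
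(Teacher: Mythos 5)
Your proof is correct, but it takes a genuinely different route from the paper's. The paper proves uniform convergence by a quantitative cone argument: for a fixed $x_0$, it considers a cone $\Gamma$ with vertex at $T(x_0)$ opening in the $x_0$-direction and half-angle $\pi/2-\theta$, uses monotonicity \eqref{eqn:mono} to show $T^{-1}(\Gamma)$ lies in a complementary cone $\Omega$ far from the origin, and then estimates $\inf_{B_\j} g\cdot\mathcal{L}^n(\Gamma\cap B_\j)\le\nu(\Gamma)\le\mu(\Omega)\to0$ as $|x_0|\to\infty$, forcing $\mathcal L^n(\Gamma\cap B_\j)\to0$ and hence $T(x_0)\to\j\,x_0/|x_0|$ after letting $\theta\to0$. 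You instead run a soft compactness/subsequence argument: $|T|\le\j$ a.e.\ gives a convergent subsequence $T(x_k)\to y^*$, and testing monotonicity against comparison points $y$ with $T(y)$ arbitrarily close to any prescribed $z\in B_\j$ (these exist because $\nu(B_\delta(z))>0$) forces $\langle e, y^*\rangle\ge\langle e,z\rangle$ for all $z$, hence $y^*=\j e$; uniformity then follows by contradiction.

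Both proofs use the same two ingredients (the support constraint $|T|\le\j$ and monotonicity), but there are a few noteworthy differences. The paper first invokes Caffarelli's regularity (Theorem~\ref{thm:Caff-reg}) to get continuity of $T$ so the argument can be run pointwise; your argument works with the a.e.\ defined $T$ directly, dispensing with the regularity input. The paper's cone argument genuinely uses that $g$ is bounded away from zero on $B_\j$ (to pass from $\nu(\Gamma\cap B_\j)\to0$ to $\mathcal L^n(\Gamma\cap B_\j)\to0$), whereas your argument only needs $g>0$ a.e.\ on $B_\j$ to produce the comparison points $y_{z,\delta}$ -- a mild generalization. On the other hand, the cone argument is quantitative in spirit and could in principle yield a rate of decay of $|T(x)-\j\,x/|x||$ in terms of the decay of $\mu(\R^n\setminus B_r)$, which the subsequence method does not provide. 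Finally, your treatment of the ``in particular'' assertion via the fundamental theorem of calculus along the segment $[x-\ep\a,\,x+\ep\a]$, together with continuity of $\phi$ to upgrade from a.e.\ to everywhere, is cleaner and more explicit than the paper's somewhat terse remark that ``$\phi$ behaves like the cone $\j|x|$ at infinity.''
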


\begin{proof}
We begin by noticing that, as a consequence of Theorem \ref{thm:Caff-reg}, $T$ %\in C^{0}_{\rm loc}(\R^n)$
is continuous on $\R^n$ and, in particular,
the map $T$ is well defined at every point.

Let $x_{0} \in \R^{n}$ and $\t \in (0, \pi/4)$ be fixed, and consider the cone with vertex at $T(x_{0})$ and pointing in the $x_{0}$-direction
\[ \Gamma := \bigg{\{} y \in \R^{n} : \angle (x_{0}, y-T(x_{0})) \leq \frac{\pi}{2} - \t \bigg{\}}. \] 
By \eqref{eqn:mono} we see that
$$
\angle (x-x_{0}, T(x)-T(x_{0})) \leq \frac{\pi}2,
$$
hence
\[ \angle (x-x_0,x_0) \leq \angle (x-x_{0}, T(x)-T(x_{0})) + \angle (x_{0}, T(x)-T(x_{0}))
\leq \pi - \t  \qquad \forall \,x \text{ s.t. } \,T(x) \in \Gamma, \]
and so, up to a set of measure zero, the preimage of $\Gamma$ under $T$ is contained in the (concave) cone
\[ \Omega := \{ x \in \R^{n} : \angle(x_{0}, x-x_{0}) \leq \pi - \t \}. \]
Moreover, since $T_{\#}\mu = \nu$, 
\[ \inf_{x \in B_{\j}}g(x)\, \mathcal{L}^{n}(\Gamma \cap B_{\j}) \leq \nu(\Gamma \cap B_{\j}) = \nu(\Gamma) \leq \mu(\Omega). \]
Let $B = B_{(|x_{0}|\tan{\t})/2}$, and notice that $\Omega \subseteq \R^{n} \setminus B$. This proves that $\mu(\Omega) \leq \mu(\R^{n} \setminus B)$.

Now, $\mu(\R^{n} \setminus B) \to 0$ as $|x_{0}| \to \infty$ since $B$ covers $\R^{n}$ as $|x_{0}| \to \infty$. Recalling that $g$ is bounded away from zero in $B_{\j}$, we have that
\[ \lim_{|x_{0}| \to \infty} \mathcal{L}^{n}(\Gamma \cap B_{\j}) = 0. \]
Letting $\t \to 0$, we see that $T(x_{0}) \to \j \frac{x_{0}}{|x_{0}|}$. As the point $x_{0}$ was fixed arbitrarily, $\nab \phi(x) = T(x) \to \j \frac{x}{|x|}$ uniformly as $|x| \to \infty$. Thus, $\phi$ behaves like the cone $\j|x|$ at infinity. In particular, for any fixed $\ep > 0$ and for all $\a \in \mathbb{S}^{n-1}$, the function $\phi(x+\e\a)+\phi(x-\e\a)-2\phi(x) \to 0$ as $|x| \to \infty$.
\end{proof}

Thanks to Lemma~\ref{lemma:finite max (Gen)}, in \cite{c2,c2.1}, Caffarelli proved the following result.

\begin{theorem}
\label{thm:Caff}
Let $V,\, W \in C^{1,1}_{\rm{loc}}(\R^{n})$ be such that $e^{-V(x)} \, dx,\, e^{-W(x)} \, dx \in \Prob(\R^{n})$. Suppose there exist constants $0 < \l_{W},\, \L_{V} < \infty$ such that $D^{2}V(x) \leq \L_{V}\Id$ and $\l_{W}\Id \leq D^{2}W(x)$ for a.e. $x \in \R^{n}$. Then, the optimal transport $T$ that takes $e^{-V(x)} \, dx$ to $e^{-W(x)} \, dx$ is globally Lipschitz and satisfies
\begin{equation}
\label{eqn:gradT}
\| \nab T \|_{L^\infty(\R^{n})} \leq \sqrt{\L_{V}/\l_{W}}.
\end{equation}
\end{theorem}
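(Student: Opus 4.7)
The plan is to establish a dimension-free pointwise upper bound on the Hessian of the convex Brenier potential $\phi$ (so that $T=\nab\phi$), following the second-derivative method of Caffarelli's original argument. By Theorem~\ref{thm:Brenier} the potential $\phi$ exists and is unique up to an additive constant; by Theorem~\ref{thm:Caff-reg} (extended to $\R^n$ as noted after its statement) it is of class $C^{2,\b}_{\rm loc}(\R^n)$ and satisfies the Monge-Amp\`ere equation
\[
\log\det D^2\phi(x) = W(\nab\phi(x)) - V(x)
\]
classically. Since $\phi$ is convex, the bound \eqref{eqn:gradT} reduces to showing $\phi_{\a\a}(x) \leq \sqrt{\L_V/\l_W}$ for every unit vector $\a$ and every $x \in \R^n$. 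Fixing $\a$, I would analyse the nonnegative second-difference function
\[
u_\e(x) := \phi(x+\e\a) + \phi(x-\e\a) - 2\phi(x),
\]
for which $u_\e(x)/\e^2 \to \phi_{\a\a}(x)$ as $\e \to 0$; it is therefore enough to establish $\sup_x u_\e(x) \leq \bigl(\sqrt{\L_V/\l_W}+o(1)\bigr)\e^2$ as $\e \to 0$.

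The heart of the argument is a computation at an interior maximum $x_0$ of $u_\e$. Set $y^\pm := \nab\phi(x_0\pm\e\a)$ and $y := \nab\phi(x_0)$. The first-order condition $\nab u_\e(x_0)=0$ gives the midpoint identity $y=\tfrac12(y^++y^-)$, while $D^2u_\e(x_0)\leq 0$, combined with the concavity and monotonicity of $\log\det$ on the cone of symmetric positive-definite matrices, yields
\[
\tfrac12\log\det D^2\phi(x_0+\e\a) + \tfrac12\log\det D^2\phi(x_0-\e\a) \leq \log\det D^2\phi(x_0).
\]
Substituting the Monge-Amp\`ere equation and using $D^2V\leq\L_V\Id$ on the right turns this into
\[
W(y^+)+W(y^-)-2W(y) \leq V(x_0+\e\a)+V(x_0-\e\a)-2V(x_0) \leq \L_V\e^2,
\]
and the midpoint identity together with $D^2W\geq\l_W\Id$ sharpens the left-hand side to $\tfrac{\l_W}{4}|y^+-y^-|^2$, so that $|y^+-y^-|^2 \leq (4\L_V/\l_W)\e^2$. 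Combining the Taylor expansion $y^+-y^- = 2\e\,D^2\phi(x_0)\a + O(\e^{1+\b})$ with $\phi_{\a\a}(x_0) \leq |D^2\phi(x_0)\a|$ then gives $u_\e(x_0) \leq \bigl(\sqrt{\L_V/\l_W}+O(\e^\b)\bigr)\e^2$; since $x_0$ was a maximum, the same estimate holds for every $x$, and letting $\e\to0$ yields $\phi_{\a\a}(x)\leq\sqrt{\L_V/\l_W}$.

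The main obstacle is that the supremum of $u_\e$ need not be attained in $\R^n$: Lemma~\ref{lemma:finite max (Gen)} supplies $u_\e(x)\to 0$ at infinity only when the target is compactly supported, while $e^{-W}\,dx$ here is merely log-concave. To circumvent this I would proceed by approximation: replace $W$ by a convex $W_R$ (together with a normalising constant) so that $e^{-W_R}\,dx$ is a probability density supported in $B_R$, bounded below on $B_R$, and still satisfying $D^2W_R\geq\l_W\Id$ on the relevant region. For each $R$, Lemma~\ref{lemma:finite max (Gen)} applies to the corresponding optimal potential $\phi_R$ and forces the maximum of the second difference to lie in a compact set; the computation above then delivers $\|\nab T_R\|_{L^\infty}\leq\sqrt{\L_V/\l_W}$ with a constant independent of $R$. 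The stability of optimal transport (the remark following Theorem~\ref{thm:Caff-reg}) gives $\nab\phi_R\to\nab\phi$ in measure as $R\to\infty$, and lower semicontinuity of the Lipschitz norm transfers \eqref{eqn:gradT} to $T$. Designing a truncation $W_R$ that simultaneously meets the hypotheses of Lemma~\ref{lemma:finite max (Gen)} and preserves the essential lower bound on $D^2W$ is the only delicate technical step; the remainder is the interior-maximum calculation above.
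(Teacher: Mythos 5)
Your computation up to the estimate $|y^{+}-y^{-}|\leq 2\sqrt{\Lambda_{V}/\lambda_{W}}\,\e$ is correct and agrees with the paper: the midpoint identity at the maximum, the monotonicity-plus-concavity of $\log\det$ on positive matrices, and the two-sided Hessian bounds deliver exactly what the paper obtains in the form $\delta\leq\e\sqrt{\Lambda_V/\lambda_W}$ (in your notation $|y^+-y^-|=2\delta$). The truncation of $W$ so that the target is compactly supported, which you flag as the delicate step, is in fact the same trick the paper uses and poses no difficulty: one sets $W$ to $+\infty$ outside a large ball and normalizes, and the strong-convexity lower bound is only ever invoked at points of the image, which lie in that ball.

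Where the argument breaks down is the concluding step, where you pass from $|y^{+}-y^{-}|\leq 2\sqrt{\Lambda_{V}/\lambda_{W}}\,\e$ to $\phi_{\a\a}(x)\leq\sqrt{\Lambda_{V}/\lambda_{W}}$ via the Taylor expansion $y^{+}-y^{-}=2\e\,D^{2}\phi(x_{0})\a+O(\e^{1+\b})$. The implicit constant in $O(\e^{1+\b})$ is the local $C^{2,\b}$ seminorm of $\phi$ near $x_{0}$, and $x_{0}=x_{0}(\e)$ is not controlled: it is the maximizer of the second difference, and as $\e\to0$ it may escape to infinity (it certainly depends on the truncation parameter as well), so the local regularity constant entering your error term can blow up. There is no a priori global $C^{2,\b}$ bound on $\phi$ — indeed a global, dimension-free bound on $D^{2}\phi$ is precisely what we are trying to prove — so the $O(\e^{\b})$ in your final line is not uniform and the limit $\e\to0$ cannot be taken. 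What your argument actually proves rigorously (using only the monotonicity of $t\mapsto\langle\nab\phi(x_{0}+t\a)-\nab\phi(x_{0}-t\a),\a\rangle$, no Taylor expansion) is $u_\e(x_0)\leq\e\,\langle y^+-y^-,\a\rangle\leq 2\sqrt{\Lambda_V/\lambda_W}\,\e^{2}$, hence $\|D^{2}\phi\|_{L^\infty}\leq 2\sqrt{\Lambda_V/\lambda_W}$ — the factor-two bound. The paper removes the factor two not by Taylor expansion but by a bootstrap that remains entirely at the finite-difference level: given a provisional global bound $\|D^{2}\phi\|\leq a_{0}$, one improves the estimate on the integral $\int_{0}^{\e}\langle\nab\phi(x_{0}+t\a)-\nab\phi(x_{0}-t\a),\a\rangle\,dt$ by using $\min\{2\e C,2a_{0}t\}$ as a pointwise majorant of the integrand, which only requires the Lipschitz bound $a_{0}$ and not any Hölder modulus. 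Iterating the resulting recursion $a_{0}\mapsto(2Ca_{0}-C^{2})/a_{0}$ converges to $C=\sqrt{\Lambda_V/\lambda_W}$. You should replace the Taylor step with this bootstrap; as written, the final improvement from $2C$ to $C$ is not justified.
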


\begin{proof}
By the stability of optimal transports, we may assume that $W$ is equal to infinity outside the ball $B_{\j}$ for some fixed $\j > 0$. Indeed, define
\[ W^{\j} := \begin{cases}
W &\text{ in } B_{\j} \\
\infty &\text{ in }  \R^{n} \setminus B_{\j}
\end{cases}\]
and $c_{\j} \in (0, \infty)$ such that
\[ \int_{\R^{n}} e^{c_{\j}-W^{\j}(x)} \, dx = 1. \]
Clearly, $e^{c_{\j}-W^{\j}} \to e^{-W}$ in $L^{1}(\R^{n})$ as $\j \to \infty$. Hence, if we prove \eqref{eqn:gradT} for the optimal transport $T^{\j}$ that takes $e^{-V(x)} \, dx$ to $e^{c_{\j}-W^{\j}(x)} \,dx$, letting $\j \to \infty$ we obtain the same estimate for $T$.

Also, by Theorem \ref{thm:Caff-reg}, the convex potential $\phi:\R^n\to \R$ associated to the optimal transport $T$
is of class $C^3$; therefore, $\phi$ satisfies the Monge-Amp\`{e}re equation 
\[ \det{D^{2}}\phi(x) = \frac{e^{-V(x)}}{e^{-W(\nab \phi(x))}}, \]
or equivalently,
\begin{equation}
\label{eqn:log-MA} 
\log \det{D^{2}}\phi(x) = -V(x) +  W(\nab \phi(x)).
\end{equation}
For fixed $\ep > 0$, we define the incremental quotient of a function $f:\R^n \to \R$ at $(x, \a) \in \R^{n} \times \mathbb{S}^{n-1}$ by
\[ f^{\ep}(x, \a) := f(x+\ep\a) + f(x-\ep\a) - 2f(x). \] 

By convexity of $\phi$ we see that $\phi^\ep \geq 0$.
Also, it follows by Lemma~\ref{lemma:finite max (Gen)} that $\phi^{\ep}\to 0$ as $|x| \to \infty$.
Thus $\phi^{\ep}$ attains a global maximum at some $(x_{0}, \a_{0}) \in \R^{n} \times \mathbb{S}^{n-1}$. Up to a rotation, we assume that $\a_{0} = \eone$. Thus,
\begin{equation}
\label{eqn:grad of diff quo} 
0 = \nab \phi^{\ep}(x_{0}, \eone) = \nab \phi(x_{0}+\ep\eone) + \nab \phi(x_{0}-\ep\eone) - 2\nab \phi(x_{0}).
\end{equation}
Moreover, because $\eone$ is the maximal direction,
\[ 0 = \partial_{\b}\phi^{\ep}(x_{0}, \eone) =\ep \langle \nab \phi(x_{0}+\ep\eone) - \nab \phi(x_{0}-\ep\eone), \b \rangle \qquad \forall \,\b \perp \eone. \]
Taking $\b = \text{e}_{i}$ for $i \neq 1$ and utilizing \eqref{eqn:grad of diff quo}, we see that all the components but the first of $\nab \phi(x_{0}+\ep\eone)$, $\nab \phi(x_{0}-\ep\eone)$, and $\nab \phi(x_{0})$ are equal. Let $\de := \langle \nab \phi(x_{0}+\ep\eone) - \nab \phi(x_{0}-\ep\eone), \eone \rangle/2$, and observe that, by \eqref{eqn:grad of diff quo},
\[\begin{split} 
\langle \nab \phi(x_{0}), \eone \rangle \pm \de 
%&= \frac{1}{2}\langle \nab \phi(x_{0}+\ep\eone) + \nab \phi(x_{0}-\ep\eone), \eone \rangle \pm \de \\
&= \frac{1}{2}\langle \nab \phi(x_{0}+\ep\eone) + \nab \phi(x_{0}-\ep\eone), \eone \rangle \pm \frac{1}{2}\langle \nab \phi(x_{0}+\ep\eone) - \nab \phi(x_{0}-\ep\eone), \eone \rangle \\
&=\langle \nab \phi(x_{0}\pm\ep\eone), \eone \rangle. 
\end{split} \]
Hence, we conclude that
\begin{equation}
\label{eqn:1st eps-del est}
\nab \phi(x_{0} \pm \ep\eone) = \nab \phi(x_{0}) \pm \de \eone.
\end{equation}
Another consequence of $\phi^\ep$ achieving a maximum at $x_0$ is
\begin{equation}
\label{eqn:discrete max}
D^{2}\phi(x_{0}+\ep\eone) + D^{2}\phi(x_{0}-\ep\eone) - 2D^{2}\phi(x_{0}) \leq 0.
\end{equation} 
We recall that \begin{equation} 
\label{eqn:der of det}
\lim_{\ep \to 0^{+}}  \frac{\det{(A+\ep B)} - \det(A)}{\ep} = \det{(A)}\trace{(A^{-1}B)}
\end{equation}
for all square matrices $A$ and $B$ with $A$ invertible. Also, if we set $F(A) := \log\det A$, since $F$ is concave on the space of positive semidefinite $n \times n$ matrices and recalling \eqref{eqn:der of det}, we have
\[ \nab F(D^{2}\phi(x_{0})) = (D^{2}\phi(x_{0}))^{-1} \]
and
\[ F(D^{2}\phi(x_{0}\pm \ep\eone)) \leq F(D^{2}\phi(x_{0})) + \langle (D^{2}\phi(x_{0}))^{-1}, D^{2}\phi(x_{0}\pm \ep\eone) - D^{2}\phi(x_{0}) \rangle. \]
In particular, from \eqref{eqn:discrete max} and the convexity of $\phi$, we deduce that
\[ F(D^{2}\phi(x_{0}+\ep\eone)) + F(D^{2}\phi(x_{0}-\ep\eone)) - 2F(D^{2}\phi(x_{0})) \leq 0. \]

Now, let us, for fixed $\ep > 0$, consider the incremental quotient of \eqref{eqn:log-MA} at $(x_{0}, \eone)$. 
Using \eqref{eqn:1st eps-del est}, we realize that
\begin{equation}
\label{eqn:log-con}
V^{\ep}(x_{0}, \eone) \geq W^{\de}(\nab \phi(x_{0}), \eone).
\end{equation}
Observe that
\[ V^{\ep}(x_{0}, \eone) = \int_{0}^{\ep} \bigg{(} \int_{-t}^{t} \langle D^{2}V(x_{0} + s\eone)\eone, \eone \rangle \,ds \bigg{)} \,dt; \]
hence,
\begin{equation}
\label{V-eps est}  
V^{\ep}(x_{0}, \eone) \leq \L_{V}\ep^{2}. 
\end{equation}
Furthermore, from \eqref{eqn:1st eps-del est}, we similarly see that
\[ \l_{W}\de^{2} \leq W^{\de}(\nab \phi(x_{0}), \eone). \]
Combining this estimate with \eqref{V-eps est} and \eqref{eqn:log-con}, we get
\begin{equation}
\label{eqn:2nd eps-del est}
\ep\sqrt{\L_{V}/\l_{W}} \geq \de.
\end{equation}
Set $C := \sqrt{\L_{V}/\l_{W}}$. Since
\[ \phi^{\ep}(x_{0}, \eone) = \int_{0}^{\ep} \langle \nab \phi(x_{0}+t\eone) - \nab \phi(x_{0}-t\eone), \eone \rangle \, dt,\]
the convexity of $\phi$, \eqref{eqn:1st eps-del est}, and \eqref{eqn:2nd eps-del est} give us that
\[ \phi^{\ep}(x_{0}, \eone) = 2\de \ep \leq 2C\ep^{2}, \]
and so
\[ \| \nab T \|_{L^\infty(\R^n)}  = \| D^{2}\phi \|_{L^\infty(\R^n)} \leq 2C. \]
Notice that this is the desired estimate up to a factor $2$. We use a bootstrapping argument to remove this factor.
Suppose that $0 \leq \| D^{2}\phi \|_{L^\infty(\R^n)} \leq a_{0}$ for some $a_{0} > C$. For any $0 \leq t \leq \ep$, by \eqref{eqn:1st eps-del est} and \eqref{eqn:2nd eps-del est},
\[ |\langle \nab \phi(x_{0}+t\eone) - \nab \phi(x_{0}-t\eone), \eone \rangle| \leq \min \{ 2\ep C, 2a_{0}t \}. \] 
Thus,
\begin{equation*}
\phi^{\ep}(x_{0}, \eone) \leq \int_{0}^{\frac{\ep C}{a_{0}}} 2a_{0}t \, dt + \int_{\frac{\ep C}{a_{0}}}^{\e} 2\ep C \, dt = \ep^{2}\frac{(2C a_{0}-C^{2})}{a_{0}}.
\end{equation*}
In other words, if $\|D^{2}\phi\|_{L^\infty(\R^{n})} \leq a_{0}$ with $a_{0} > C$, then 
\[ \|D^{2}\phi\|_{L^\infty(\R^{n})} \leq \frac{(2C a_{0}-C^{2})}{a_{0}}. \] 
Starting with $a_{0} = 2C$ and repeating the above procedure an infinite number of times, we prove \eqref{eqn:gradT} since $C$ uniquely solves $(2C a-C^{2})/a = a$.
\end{proof}

\begin{remark}
\label{rmk:local}
Notice that the above proof relies only on the local behavior of our densities $e^{-V}$ and $e^{-W}$. In particular, the bounds on the Hessians of $V$ and $W$ are only used near the maximum point $x_{0}$ and its image $\nab \phi(x_{0})$, respectively. This simple observation will play an important role in the proof of Theorem~\ref{thm:main}.
\end{remark}

\begin{remark}
\label{rmk: Caff no ideal}
The above result is not ideal. Indeed, if $V = W$, then $T = \Id$ and one would like to have the bound $\|\nab T\|_{L^{\infty}(\R^{n})} \leq 1$ instead of $\|\nab T\|_{L^\infty(\R^{n})} \leq \sqrt{\L_{V}/\l_{V}}$.
\end{remark}

\section{Compactly Supported Perturbations: Proof of Theorem~\ref{thm:main}}
\label{sec:Compactly Supported Perturbations}

In the following lemma, we prove an upper bound on how far points travel under the transport map when the source measure is perturbed in a certain fixed ball $B_P$. 
We capture and quantify that our perturbations are compactly supported.
Lemma~\ref{lemma:R-image-T-est-V-cut} will be applied in the proof of Theorem~\ref{thm:main} to the inverse transport.

Furthermore, given our convex function $V$, we consider, for $\j \in \N$, 
\begin{equation}
\label{eqn:vj}
V^{\j} := \begin{cases}
V &\text{ in } B_{\j} \\
\infty &\text{ in } \R^{n} \setminus B_{\j},
\end{cases}
\end{equation}
and we approximate $e^{-V(x)} \, dx$ with compactly supported measures $e^{c_\j-V^{\j}(x)} \, dx$. 
This approximation is in the spirit of Caffarelli's approximation in the proof of Theorem~\ref{thm:Caff}. It allows us to find maximum points of a suitable function and guarantees that they do not escape to infinity in the proof of Theorem~\ref{thm:main}. This approximation procedure is purely technical. Hence, on a first reading of Lemma~\ref{lemma:R-image-T-est-V-cut}, the reader may just take $\j=\infty$.

\begin{lemma}
\label{lemma:R-image-T-est-V-cut}
Let $V \in C^{\infty}(\R^{n})$ be such that $\mu := e^{-V(x)} \, dx \in \Prob(\R^{n})$. 
Suppose that $V(0) = \inf_{\R^{n}} V$ and there exist constants $0 < \l,\, \L < \infty$ such that $\l\Id \leq D^{2}V(x) \leq \L\Id$ for all $x \in \R^{n}$.
Moreover, let $P > 0$, $p \in C^{\infty}_{c}(B_{P})$, and $c_p \in \R$ be such that $e^{-V(x)+c_p-p(x)} \, dx \in \Prob(\R^{n})$. 
Given $\j > P$, set $V^{\j}$ as in \eqref{eqn:vj}
and choose $c_{p,\j} \in (0, \infty)$ such that $\mu_{p,\j} :=e^{c_{p,\j}-V^{\j}(x)+c_p-p(x)} \, dx \in \Prob(\R^{n})$.
If $T$ is the optimal transport map that takes $\mu_{p,\j}$ to $\mu$, then there exist constants $P' = P'(P,\l,\L,\| p \|_{L^\infty(\R^n)}) > 0$ and $\j' = \j'(n, V(0),P,\l,\L,\| p \|_{L^\infty(\R^n)}) > P$ such that for all $\j \in [\j', \infty]$,
\begin{equation}
\label{eqn:image-T-V-cut}
T(B_{P}) \subseteq  B_{P'}.
\end{equation}
\end{lemma}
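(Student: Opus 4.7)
The plan is to argue by contradiction via a cone and monotonicity argument in the spirit of Lemma~\ref{lemma:finite max (Gen)}. Suppose there exist $x_0 \in B_P$ and $y_0 := T(x_0)$ with $|y_0|$ arbitrarily large, and set $e := y_0/|y_0|$.

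For a small $\t > 0$ to be fixed below, introduce the cone $\Gamma_\t := \{z \in \R^n : \angle(e, z - x_0) \leq \pi/2 - \t\}$ with vertex $x_0$ pointing in direction $e$. Since $T$ is the gradient of a convex function, hence monotone, an angle computation analogous to the one in the proof of Lemma~\ref{lemma:finite max (Gen)} shows $T(\Gamma_\t) \subseteq \R^n \setminus B_{|y_0|\tan\t/2}$: for $x \in \Gamma_\t$, writing $x - x_0 = |x - x_0|\,v$ with $\angle(e, v) \leq \pi/2 - \t$, monotonicity of $T$ gives $\angle(T(x) - y_0, v) \leq \pi/2$, and by the triangle inequality for angles, $\angle(e, T(x) - y_0) \leq \pi - \t$. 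Hence $T(x) - y_0$ lies outside the cone of half-angle $\t$ around $-e$, which contains a translated ball of radius $|y_0|\tan\t/2$ around the origin. Coupling this with the mass conservation $\mu_{p,\j}(\Gamma_\t) \leq \mu(T(\Gamma_\t))$ and the explicit lower bound on the density of $\mu_{p,\j}$ on $B_\j$ (from $V|_{B_\j} \leq V(0) + \L\j^2/2$ and $|p| \leq \|p\|_{L^\infty(\R^n)}$) yields
\[
e^{c_{p,\j}+c_p-\|p\|_{L^\infty(\R^n)}-V(0)-\L\j^2/2}\,\mathcal{L}^n(\Gamma_\t \cap B_\j) \;\leq\; \mu_{p,\j}(\Gamma_\t) \;\leq\; \mu(\R^n \setminus B_{|y_0|\tan\t/2}).
\]
The right-hand side decays in $|y_0|$ via $V(y) \geq V(0) + \l|y|^2/2$, while the left-hand side is bounded below by an explicit positive quantity, so this inequality forces $|y_0| \leq P'$.

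The hard part is extracting a bound on $P'$ that depends only on $P, \l, \L,$ and $\|p\|_{L^\infty(\R^n)}$ and is independent of the dimension $n$ and of $V(0)$. Both sides contain dimension-dependent factors: the volume constant $\omega_n$ hidden in $\mathcal{L}^n(\Gamma_\t \cap B_\j)$, the value $V(0)$ on the left, and the Gaussian tail $(2\pi/\l)^{n/2}\,\mathbb{P}\bigl(|G| > |y_0|\tan\t\sqrt{\l}/2\bigr)$ on the right (with $G \sim \mathcal{N}(0, \Id)$). To absorb these, I would use the two-sided constraint $V(0) \in [\tfrac{n}{2}\log(2\pi/\L), \tfrac{n}{2}\log(2\pi/\l)]$ implied by $\int e^{-V}=1$ together with $\l\,\Id \leq D^2V \leq \L\,\Id$, and Stirling's asymptotics for $\omega_n$. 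Taking $\j \geq \j'(n, V(0), P, \l, \L, \|p\|_{L^\infty(\R^n)})$ sufficiently large---precisely the role of $\j'$ in the statement---then arranges for the dimension-dependent contributions to cancel in the critical inequality, yielding a bound on $P'$ depending only on the intrinsic parameters. Balancing the cone opening $\t$ against the Gaussian decay rate of $\mu$ to extract the cleanest constant will be the most delicate step.
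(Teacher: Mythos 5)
Your cone-and-monotonicity strategy is geometrically sound at the level of the containment $T(\Gamma_\t)\subseteq \R^n\setminus B_{|y_0|\tan\t/2}$, but it diverges from the paper's method and, more importantly, it has a genuine gap that you flag but do not close: the resulting mass-conservation inequality does \emph{not} force a dimension-independent bound on $|y_0|$.

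Concretely, after fixing the lossy step in your display (you should restrict the lower bound to a bounded region such as $\Gamma_\t\cap B_{CP}$ rather than $\Gamma_\t\cap B_\j$, since the factor $e^{-\L\j^2/2}$ makes the left-hand side vanish as $\j\to\infty$), the inequality you obtain pits the $\mu_{p,\j}$-measure of a cone of half-angle $\pi/2-\t$ against the $\mu$-measure of the complement of a ball. Both sides carry factors that are exponential in $n$ and do not cancel: the solid angle of $\Gamma_\t$ captures a fraction of $\mathbb{S}^{n-1}$ of order $e^{-n\sin^2\t/2}$, the density comparison contributes $(\l/\L)^{n/2}$, and the Gaussian tail $\mu(\R^n\setminus B_s)$ is of order $1$ until $s\gtrsim\sqrt{n/\l}$ and only then begins to decay. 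Plugging these in, the inequality fails (giving the desired contradiction) only when $|y_0|\tan\t\gtrsim\sqrt{n/\l}$, i.e.\ it yields $P'\sim\sqrt{n}$. The bounds on $V(0)$ and Stirling estimates for $\omega_n$ that you invoke cancel the $e^{-V(0)}$ and $\omega_n$ factors on both sides but leave the solid-angle factor $e^{-n\sin^2\t/2}$ and the ratio $(\l/\L)^{n/2}$ untouched, and there is no choice of $\t$ (possibly $n$-dependent) that removes the $\sqrt{n}$ growth.

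The paper avoids this by comparing the transport \emph{cost} rather than masses of geometric regions. For the lower bound it uses a fixed ball $B=B_P(\bar x)$ disjoint from $B_P$ on which $|T(y)-y|$ is uniformly large, giving $\cost(T)\gtrsim [\text{distance}]^2\,e^{c_{p,\j}+c_p-V(0)-18\L P^2}\,\mathcal L^n(B_P)$. For the upper bound it invokes the Talagrand-type inequality \eqref{eqn:tala}, $W_2^2(\mu,\mu_{p,\j})\leq \frac{2}{\l}\ent(\mu_{p,\j}\,|\,\mu)$, and then bounds the relative entropy by $C(\|p\|_{L^\infty})\,\mu(B_P)\leq C(\|p\|_{L^\infty})\,e^{-V(0)}\,\mathcal L^n(B_P)$. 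The dimension-dependent factor $e^{-V(0)}\mathcal L^n(B_P)$ thus appears on \emph{both} sides and cancels exactly. This is the key mechanism missing from your proposal: without the transport inequality (or an equivalent device producing an upper bound proportional to $\mu(B_P)$), the dimension-dependence does not go away.
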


Even though this lemma is not independent of dimension as written (specifically, $\j'$ depends on $n$), the dimensional dependence does not affect the constant $P'$ and disappears in the limit as $\j \to \infty$. 
Thus, we can indeed prove a global estimate on the optimal transport taking $e^{-V(x)} \, dx$ to $e^{-V(x)+c_q-q(x)} \, dx$ that is independent of dimension.

Lemma~\ref{lemma:R-image-T-est-V-cut} is written under slightly different assumptions than Theorem~\ref{thm:main}.
In particular, besides the obvious additional regularity assumptions on $V$ and its perturbation, made only for simplicity, we have not required that the perturbation be semiconvex.
That said, if we assume the the distributional Hessian of $p$ is indeed bounded below by $-\l_{p}\Id$, then we can replace the dependence on $\| p \|_{L^\infty(\R^n)}$ with a dependence on $\lambda_p$, as explained in the following remark.

\begin{remark}
\label{rmk:l-infty-q}
Let $p$ be a function compactly supported in $B_P$ that satisfies the semiconvexity condition $D^{2} p \geq -\l_{p}\Id$ in the sense of distributions. 
Then, its $L^\infty$-norm is controlled by a constant depending only on $P$ and $\l_p$ (in particular, it is independent of dimension):
\begin{equation}
\label{eqn:l-infty-q}
\| p\|_{L^\infty(\R^n)} \leq 4 \lambda_p P^2.
\end{equation}
First, up to convolving $p$ with a standard convolution kernel, we can assume that $p$ is smooth.
Then, we observe that every 1-dimensional restriction $f_\alpha(t) = p (t\alpha)$, for $t\in \R$ and $\alpha\in \mathbb{S}^{n-1}$, is compactly supported in $[-P,P]$ and has second derivative bounded below by $-\lambda_p$.
This implies that 
\begin{equation}
\label{eqn:l-infty-q2}
\|f_\alpha'\|_{L^\infty(\R)} \leq 2\lambda_p P.
\end{equation}
Indeed, suppose to the contrary that $f_\alpha'(t_0)>2\lambda_p P$ for some $t_0 \in [-P,P]$. 
By integration, we would get
$$
0=f_\alpha'(P)\geq f_\alpha'(t_0)+\int_{t_0}^P f_\alpha'' (\tau)\,d\tau
>2\lambda_p P +\lambda_p(P-t_0)>0.
$$
Impossible. This proves \eqref{eqn:l-infty-q2}, and  
\eqref{eqn:l-infty-q} holds by integrating.
\end{remark}

Before proceeding with the proof of Lemma~\ref{lemma:R-image-T-est-V-cut}, we recall a Talagrand-type transport inequality. 
Given $\mu_1, \mu_2 \in \Prob(\R^n)$, we denote the squared Wasserstein distance between $\mu_1$ and $\mu_2$ by $W_2^2(\mu_1,\mu_2)$ (see \cite[Chapter 6]{v} for the general definition), and we consider their relative entropy
\begin{equation*}
\ent (\mu_2 | \mu_1) :=
\begin{cases}
\displaystyle{\int_{\R^n} \log \bigg( \frac{d\mu_2}{d\mu_1}\bigg) \, d\mu_2} &\text{if } \mu_2 \ll \mu_1
% &\text{if $\mu_2$ is absolutely continuous with respect to $\mu_1$}
\\
\infty &\text{otherwise}.
\end{cases}
\end{equation*}
Here, $d\mu_2/d\mu_1$ is the relative density of $\mu_2$ with respect to $\mu_1$.
If $\mu_1 = e^{-V(x)} \, dx$ for some $V \in C^{2}(\R^n)$ such that $D^{2}V(x) \geq \lambda_V \Id$ for all $x \in \R^n$, we have that (see \cite{c-e}, applied in the particular case when $\mu_1$ and $\mu_2$ are probability measures) 
\begin{equation}
\label{eqn:tala}
W_2^2 (\mu_1,\mu_2) \leq \frac{2}{\lambda_V}\ent (\mu_2 | \mu_1).
\end{equation}

In our applications, $W_2^2(\mu_1,\mu_2)$ coincides with the cost of the optimal transport taking $\mu_2$ to $\mu_1$.

\begin{proof}[Proof of Lemma~\ref{lemma:R-image-T-est-V-cut}]
Notice first that, as a consequence of Theorem \ref{thm:Caff-reg}, $T$
is continuous.

Assume there exists a point $x_{0} \in B_{P}$ with $T(x_{0}) \notin B_{10P}$ (otherwise, the statement is true with $P' = 10P$). 
We show that $T(x_{0}) \in B_{P'}$ for some $P' = P'(P,\l,\L,\| p \|_{L^\infty(\R^n)}) > 0$ that will be chosen later. 
Let 
\[
\bar{x} := x_{0}+3P\,\frac{T(x_{0})-x_{0}}{|T(x_{0})-x_{0}|},
\]
and define the constant $C_{0}$ and ball $B$ by $C_{0}P = |T(x_{0})-x_{0}|$ and $B := B_{P}(\bar{x})$.
Also, let $F : B \rightarrow \R^{n}$ be the projection of a point $y \in B$ onto the hyperplane through $T(x_{0})$ and perpendicular to $y-x_{0}$. The map $F$ is well-defined because $x_{0} \notin B$ (see Figure~\ref{fig:ballsentfar}). 
Let us assume that $\j'>6P$, so that $B \subseteq B_\j$.

\begin{figure}[ht]
\center \includegraphics[scale=1]{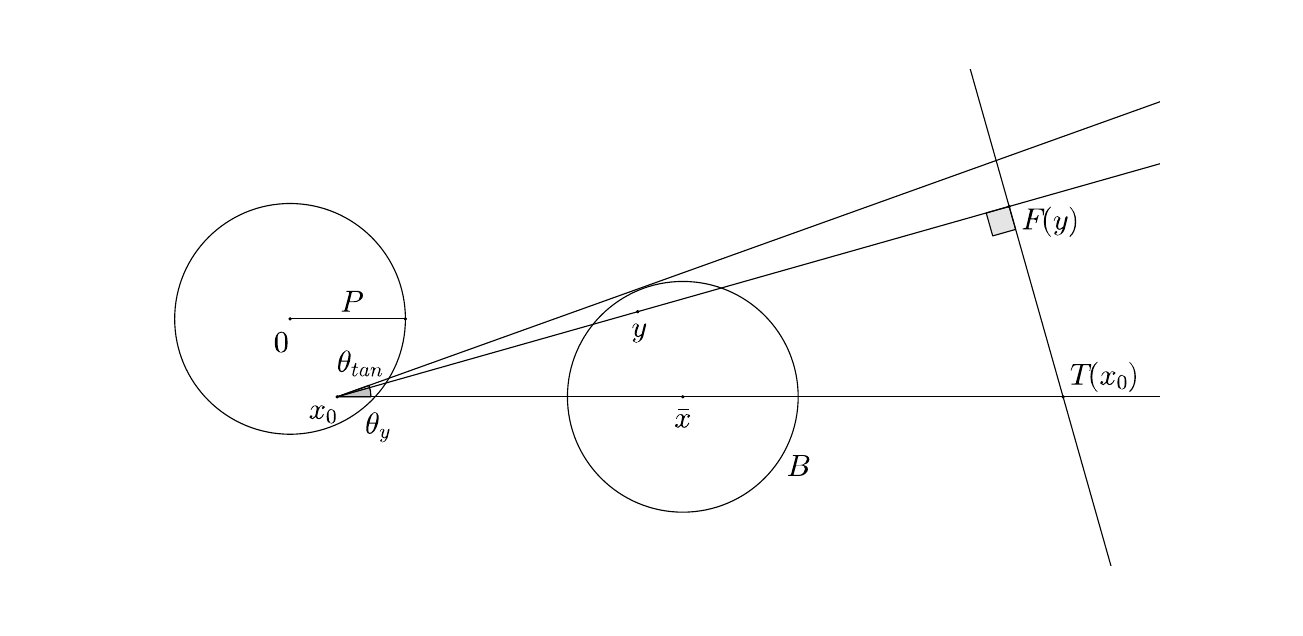}
\caption{The optimal transport sends $B$ far away.}
\label{fig:ballsentfar}
\end{figure} 
By \eqref{eqn:mono}, we have that
\[ \langle y-x_{0}, T(y)-T(x_{0}) \rangle \geq 0 \qquad \forall \,y \in B, \]
and as $F(y)$ is the closest point to $y$ in the set $\{z \in \R^{n} : \langle y-x_{0}, z-T(x_{0}) \rangle \geq 0$\},
\[ |T(y)-x_{0}| \geq |F(y)-x_{0}| \qquad \forall \,y \in B \]
(see Figure~\ref{fig:ballsentfar}). 
Given any $y \in B$, either $x_{0}$, $y$, and $\bar{x}$ determine a plane, call it $\Gamma_{y}$, within which $x_{0}$, $F(y)$, and $T(x_{0})$ determine a right triangle, or $x_{0}$, $y$, and $\bar{x}$ are collinear. 
Thus, 
\[ |F(y)-x_{0}| = C_{0}P\cos\t_{y} \] 
where $\t_{y}$ is the angle between $F(y)-x_{0}$ and $T(x_{0})-x_{0}$. 
Now, $\Gamma_{y} \cap \partial B$ is a circle of radius $P$ centered at $\bar{x}$. 
Letting $\t_{tan}$ be the angle between the line through $x_{0}$ and tangent to $\Gamma_{y} \cap \partial B$ and the line through $T(x_{0})$ and $x_{0}$, we see that $\t_{y} \leq \t_{tan}$. 
(While there are two such tangent lines, the angles they determine with the line through $T(x_{0})$ and $x_{0}$ are the same. 
Again, see Figure~\ref{fig:ballsentfar}.) 
Moreover, $|x_{0}-\bar{x}|=3P$ and $\cos\t_{tan} = 2\sqrt{2}/3$. 
Consequently,
\[ |F(y)-x_{0}| \geq C_{0}P\,\cos\t_{tan} \geq \frac{C_{0}2\sqrt{2}P}{3} \] 
and
\[ |T(y)-y| \geq |T(y)-x_{0}| - |y-x_{0}| > \frac{C_{0}2\sqrt{2}P}{3}-4P \qquad \forall \,y \in B. \]
Since $V(0) = \inf_{\R^{n}} V(x)$ and $\l\Id \leq D^{2}V(x) \leq \L\Id$, by restricting $V$ to $1$-dimensional lines through the origin we have that 
\begin{equation}
\label{poly. approx.}
V(0) + \frac{\l}{2}|x|^{2} \leq V(x) \leq V(0) + \frac{\L}{2}|x|^{2} \qquad \forall \,x \in \R^{n};
\end{equation}
hence, as $B \subseteq B_{6P}$,
\[ V(x) \leq V(0) + 18\L P^{2} \qquad \forall \,x \in B. \]

We now estimate $\cost(T)$.
Since $B_{P} \cap B = \emptyset$ and $B \subseteq B_{\j}$, we have
\begin{equation}
\label{eqn:cost-below-V-cut}
\cost(T) \geq \int_{B} |T(x)-x|^{2}e^{c_{p,\j}-V^{\j}(x)+c_{p}} \, dx \geq \Bigg{[}\frac{C_{0}2\sqrt{2}P}{3}-4P\Bigg{]}^{2}e^{c_{p,\j}-V(0)-18\L P^{2}+c_{p}} \mathcal{L}^{n}(B_{P}).
\end{equation}
Furthermore, we claim that the following upper bound on $\cost(T)$ holds:
\begin{equation}
\label{eqn:costT above}
\cost(T) \leq \frac{6}{\lambda}\|p\|_{L^\infty(\R^n)}e^{c_{p,\j}+c_{p}+\|p\|_{L^\infty(\R^n)}}\mu(B_P).
\end{equation}
To see this, first, apply the Talagrand-type transport inequality \eqref{eqn:tala} with $\mu_1 = \mu$ and $\mu_2 = \mu_{p,\j}$ to find that
\begin{equation}
\label{eqn:tala-applied}
\cost(T) \leq \frac{2}{\lambda} \int_{\R^n} (c_{p,\j}+c_p-p(x))e^{c_{p,\j}-V^{\j}(x)+c_{p}-p(x)} \, dx.
\end{equation}
Second, choose $\j'>6P$, so that 
$$
\int_{\R^n \setminus B_{\j'}} e^{-V(0) - \frac{\lambda}{2}|x|^2+\|p\|_{L^\infty(\R^n)}} \, dx \leq 1- \exp\bigg(-\| p \|_{L^\infty(\R^n)}\int_{B_P}e^{-V(0)-\frac{\Lambda}{2}|x|^2} \, dx \bigg).
$$
Notice that $|c_p| \leq \|p\|_{L^\infty(\R^n)}$ since
\begin{equation}
\label{eqn:c_p}
e^{-c_p} = \int_{\R^n} e^{-p(x)} \, d\mu(x).
\end{equation}
So, for every $\j \geq \j'$, observe that
\begin{equation*}
\begin{split}
e^{-c_{p,\j}} &= \int_{B_\j} e^{-V(x)+c_p-p(x)} \, dx = 1- \int_{\R^n \setminus B_\j} e^{-V(x)+c_p} \, dx
\geq 1-\int_{\R^n \setminus B_{\j'}} e^{-V(0)-\frac{\lambda}{2}|x|^2+\|p\|_{L^\infty(\R^n)}} \, dx 
\\& \geq \exp\Big( -\| p \|_{L^\infty(\R^n)} \int_{B_P}e^{-V(0)-\frac{\Lambda}{2}|x|^2} \, dx \Big),
\end{split}
\end{equation*}
and then, recalling that $c_{p,\j}>0$, note
\begin{equation}
\label{eqn:c-j}
c_{p,\j} \leq\| p \|_{L^\infty(\R^n)}\int_{B_P}e^{-V(0)-\frac{\Lambda}{2}|x|^2} \, dx \leq \| p \|_{L^\infty(\R^n)} e^{c_{p,\j}+c_{p}+\| p \|_{L^\infty(\R^n)}}\mu(B_P).
\end{equation}
Now, use Jensen's inequality on \eqref{eqn:c_p} and that $p$ is supported in $B_P$ to deduce that
\begin{equation}
\label{eqn:Jensen-applied}
\begin{split}
c_{p} - \int_{B_P} p(x)\,e^{c_{p,\j}-V^{\j}(x)+c_{p}-p(x)} \, dx &\leq \int_{B_P} p(x)\,e^{c_{p,\j}+c_{p}}\Big[ e^{-c_{p,\j} - c_p} - e^{-p(x)} \Big] \, d\mu(x)  \\
&\leq 2\|p\|_{L^\infty(\R^n)}e^{c_{p,\j}+c_p+\|p\|_{L^\infty(\R^n)}}\mu(B_P).
\end{split}
\end{equation}
Finally, combine \eqref{eqn:tala-applied}, \eqref{eqn:c-j}, and \eqref{eqn:Jensen-applied} to see that \eqref{eqn:costT above} holds as claimed.

In particular, since $\mu (B_P) \leq e^{-V(0)}\mathcal{L}^n(B_P)$, we have that
\begin{equation}
\label{eqn:Brenier}
\cost(T) \leq \frac{6}{\lambda}\|p\|_{L^\infty(\R^n)}e^{c_{p,\j}-V(0)+c_{p}+\|p\|_{L^\infty(\R^n)}}\mathcal{L}^{n}(B_P),
\end{equation}
provided that $\j \geq \j'$. Thus, \eqref{eqn:cost-below-V-cut} and \eqref{eqn:Brenier} imply that
\[
C_{0} \leq C':= 3\sqrt{2} + \frac{9 e^{9\L P^{2}+\frac{\|p\|_{L^\infty(\R^n)}}{2}}}{2P}\Bigg[\frac{\| p \|_{L^\infty(\R^n)}}{\l}\Bigg]^{1/2}.
\]
This proves the existence of an upper bound on $C_{0}$ depending only on $P,\,\l,\,\L$ and $\| p \|_{L^\infty(\R^n)}$.

Taking $P' := (C' + 1)P$, we deduce that 
\[ |T(x_{0})| \leq |T(x_{0})-x_{0}| + |x_{0}| \leq C_{0}P + P \leq P', \]
which proves \eqref{eqn:image-T-V-cut}.
\end{proof}

The following result is a Pogorelov-type a priori estimate on pure second derivatives of the potential associated to our optimal transport.
This technique is inspired by Pogorelov's original argument for the classical Monge-Amp\`ere equation \cite{p}. In our case, we face the additional difficulty of constructing an auxiliary function $h$ that compensates for the concavity of our perturbation and the growth of our convex function at infinity. 
Assuming that our auxiliary function attains a finite maximum, we provide a quantitative estimate on the value of $h$ at its finite maximum. 
This result contains and overcomes the primary obstacles to demonstrating that our optimal transport is globally Lipschitz.

Before stating the result, we introduce some constants and an auxiliary function $\psi$, all depending only on the constants $R,\,\l,\,\L$, and $\l_{q}$ that appear in Theorem~\ref{thm:main}. 
Define the constants $P>0$ and $Q>0$ by 
\begin{equation} 
\label{defn:P}
P := \frac{2\l_{q}+4\l_{q}R}{\l} + 1 + R \qquad \text{and}\qquad Q := \frac{\l}{2\l_{q}}+1+R;
\end{equation}
let $\overline{\psi} \in C^{2}([0, \infty))$ be given by
\begin{equation}
\label{defn:phi}
\overline{\psi}(t) := \int_{0}^t\int_0^s \vartheta(r) \,dr\, ds, \quad
\vartheta(r) := \begin{cases}
\l_{q} &r \in [0, R] \\
-\l_{q}r+\l_{q}+\l_{q}R &r \in [R,Q] \\
\frac{\l_{q}\l^{2}r}{4\l_{q}^{2}+8\l_{q}^{2}R-\l^{2}} - \frac{2\l_{q}^{2}\l+4\l_{q}^{2}\l R+\l_{q}\l^{2}+\l_{q}\l^{2}R}{4\l_{q}^{2}+8\l_{q}^{2}R-\l^{2}} &r \in [Q, P]\\
0 &r\in [P, \infty);
\end{cases}
\end{equation}
and let $\psi \in C^{2}(\R^{n})$ be defined by 
\begin{equation}
\label{defn:psii}
\psi(y) := \overline{\psi}(|y|).
\end{equation}
Observe that the function $\overline{\psi}$ is defined in such a way that $\overline{\psi}''\geq -\l/2$ in $[0,\infty)$, $\overline{\psi}= \l_{q}|\cdot|^{2}/2$ on $[0,R]$, and $\overline{\psi}'$ is supported in $B_{P}$ (see Figure \ref{fig:overlinevarphi}).

\begin{figure}[ht]
\center \includegraphics[scale=1]{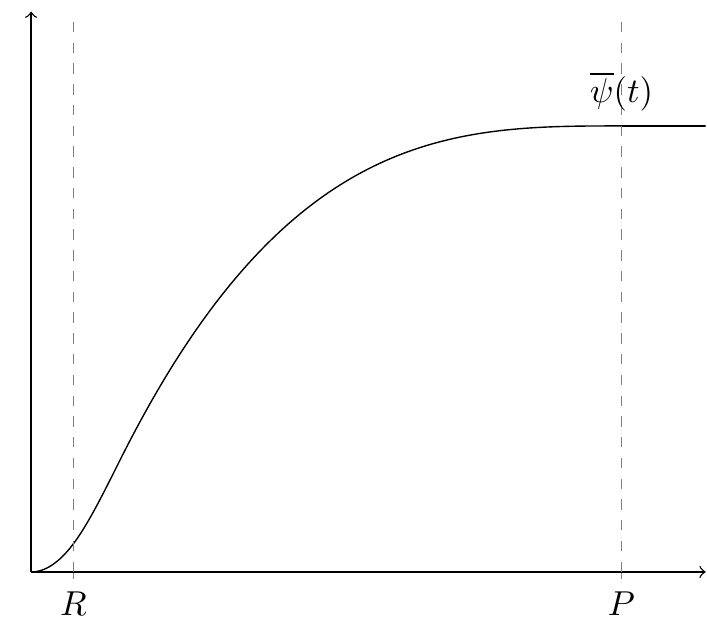}
 \caption{The graph of $\overline\psi$.}
\label{fig:overlinevarphi}
\end{figure}

\begin{proposition}
\label{prop:pogo}
Let $V,\,\l,\,\L,\,R,\,q,\,\l_{q}$, and $c_{q}$ be defined as in Theorem~\ref{thm:main}. Assume, additionally, that $V$ and $q$ are smooth. Let $P,\, \overline \psi$, and $\psi$ be defined as in \eqref{defn:P}, \eqref{defn:phi}, and \eqref{defn:psii}. Given $\j > P$, set $V^{\j}$ as in \eqref{eqn:vj} and choose $c_{q,\j} \in (0, \infty)$ such that $e^{c_{q,\j}-V^{\j}(x)+c_q-q(x)} \, dx \in \Prob(\R^{n})$. Also, let $\phi \in C^{\infty}(\R^{n})$ solve 
\[ \det{D^{2}\phi} = \frac{e^{-V}}{e^{c_{q,\j} - V^{\j}(\nab \phi) + c_{q} - q(\nab \phi)}}, \]
and assume that there exist constants $\j' , P' > 0$ such that for all $\j \in [\j' , \infty]$,
\begin{equation}
\label{eqn:R3-R2}
\nab \phi(\R^{n} \setminus B_{P'}) \subseteq \R^{n} \setminus B_{P},
\end{equation} 
or equivalently, that $[\nab \phi]^{-1}(B_{P}) \subseteq B_{P'}$. If 
\begin{equation}
\label{defn:hh}
h(x, \a) := \phi_{\a\a}(x)e^{\psi(\nab \phi(x))} 
\end{equation}
attains a maximum at some point $(x_{0},\a_{0})$ among all possible $(x,\a) \in \R^{n} \times \mathbb{S}^{n-1}$, then there exists a constant $C = C(R,P',\l,\L,\l_{q})>0$, yet independent of $n$, such that
\begin{equation*}
\label{eqn:Pogorelov (Gen)}
h(x_0,\a_0) \leq C.
\end{equation*}
\end{proposition}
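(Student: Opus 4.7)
My plan is to imitate Pogorelov's classical maximum-principle trick for the Monge-Amp\`ere equation, applied to $g := \log \phi_{\alpha_0\alpha_0} + \psi(\nabla \phi)$, for which $h(\cdot,\alpha_0) = e^g$. The weight $e^\psi$ is tailored precisely to absorb the semi-concavity of $q$ (where $D^2\psi = \lambda_q \Id$) and the growth of $V$ at infinity (where $\psi$ flattens to a constant).

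First, I would reduce to convenient coordinates. Since $\psi(\nabla \phi(x_0))$ does not depend on $\alpha$, maximizing $h(x_0,\cdot)$ over $\mathbb{S}^{n-1}$ forces $\alpha_0$ to be a unit eigenvector of $D^2\phi(x_0)$ associated with its largest eigenvalue. After a rotation in $\R^n$ I may assume $\alpha_0 = e_1$ and $D^2\phi(x_0) = \mathrm{diag}(w,\lambda_2,\ldots,\lambda_n)$ with $w := \phi_{11}(x_0) \geq \lambda_i$ for every $i$; the spectral hypotheses on $V$ and $q$ are rotation-invariant, and the radial symmetry of $\psi$ makes $\psi(\nabla \phi)$ unchanged.

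Second, I would derive a Pogorelov-type inequality at $x_0$. Writing the Monge-Amp\`ere equation as $\log\det D^2\phi = F(x) := -V(x) + V^\j(\nabla \phi) + q(\nabla \phi) - c_q - c_{q,\j}$ and setting $L := \phi^{ij}\partial_{ij}$, I would differentiate the equation twice in $x_1$, discard the nonnegative third-order term $\phi^{ip}\phi^{jq}\phi_{pq,1}\phi_{ij,1}$, and use $\phi_{l1}(x_0) = w\delta_{l1}$ together with $V^\j_{p_1p_1}(\nabla \phi(x_0)) \geq \lambda$ (valid because $\nabla\phi(x_0) \in B_\j$, where $V^\j = V$) to bound $L\phi_{11}$ from below. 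Applying $L$ also to $\psi(\nabla \phi)$, using the once-differentiated equation to rewrite $\phi^{ij}\phi_{kij} = F_k$, and invoking the first-order condition $\nabla g(x_0) = 0$---equivalently $\nabla \phi_{11}/w = -D^2\phi \cdot \nabla \psi(\nabla \phi)$---to cancel the cross terms involving $(\nabla V^\j + \nabla q)(\nabla \phi)$ and to reduce $\phi^{ij}\phi_{11,i}\phi_{11,j}/w^2$ to $\nabla\psi \cdot D^2\phi \cdot \nabla\psi$, the condition $Lg(x_0) \leq 0$ collapses to
\begin{equation*}
0 \geq -\frac{V_{11}(x_0)}{w} + w\bigl(\lambda + q_{p_1p_1}(\nabla\phi(x_0))\bigr) + \tr\bigl((D^2\psi - \nabla\psi \otimes \nabla\psi)(\nabla\phi(x_0)) \cdot D^2\phi(x_0)\bigr) - \nabla\psi(\nabla\phi(x_0)) \cdot \nabla V(x_0).
\end{equation*}

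Third, I would extract a bound on $w$ by analysing the three regimes singled out by the piecewise construction of $\bar\psi$. If $|\nabla\phi(x_0)| \geq P$, both $\bar\psi'$ and $q$ vanish at $\nabla\phi(x_0)$, so the inequality reduces to $w^2\lambda \leq V_{11}(x_0) \leq \Lambda$, giving $w \leq \sqrt{\Lambda/\lambda}$. If $|\nabla\phi(x_0)| < R$, then $D^2\psi(\nabla\phi) = \lambda_q \Id$ contributes $\lambda_q w$ to $\tr(D^2\psi \cdot D^2\phi)$, exactly cancelling the worst-case $-w\lambda_q$ from $q_{p_1p_1} \geq -\lambda_q$; the remaining terms are controlled via $|\nabla V(x_0)| \leq \Lambda P'$, a consequence of hypothesis \eqref{eqn:R3-R2} combined with \eqref{poly. approx.}. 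If $R \leq |\nabla\phi(x_0)| < P$, then $q_{p_1p_1}(\nabla\phi) = 0$ since $\supp q \subseteq B_R$, while the bound $\vartheta \geq -\lambda/2$ from the construction ensures that $\tr((D^2\psi - \nabla\psi \otimes \nabla\psi) \cdot D^2\phi)$ does not overwhelm the positive term $w\lambda$. In each case one obtains $w \leq C(R,P',\lambda,\Lambda,\lambda_q)$ dimension-independently, and since $\psi$ is bounded on $\R^n$, $h(x_0,\alpha_0) = we^{\psi(\nabla\phi(x_0))} \leq Ce^{\sup\psi}$.

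The main obstacle lies in this third step: one must balance the trace estimate $\tr((D^2\psi - \nabla\psi \otimes \nabla\psi) \cdot D^2\phi)$ carefully so as to avoid a dimensional factor arising from $\tr(D^2\phi)$. The constants $P$ and $Q$ in the definition of $\bar\psi$ are calibrated precisely so that $\bar\psi''$ is never more negative than $-\lambda/2$---absorbable by the convexity $D^2 V \geq \lambda \Id$---yet $\bar\psi'$ decays smoothly to zero at $P$, ensuring that far-away values of $\nabla\phi$ do not contribute to the auxiliary weight. Verifying that these competing requirements close the estimate uniformly in $n$ is where the algebraic heart of the argument lies.
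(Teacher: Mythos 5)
Your overall route is the one the paper takes: rotate so that $\alpha_0 = e_1$ is the top eigenvector and $D^2\phi(x_0)$ is diagonal, differentiate the Monge--Amp\`ere equation twice in the $e_1$-direction, use the first-order condition at the maximum together with the once-differentiated equation to cancel the first-derivative terms of $V^\j+q$, and close via the case analysis on $|\nabla\phi(x_0)|$ (where the calibration $\overline\psi'' \geq -\lambda/2$ and the constants $R$, $Q$, $P$ enter) along with the bound $|\nabla V(x_0)| \leq \Lambda P'$ coming from the hypothesis $[\nabla\phi]^{-1}(B_P)\subseteq B_{P'}$.

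However, there is a genuine gap in your handling of the third-order term. You discard $\phi^{ip}\phi^{jq}\phi_{pq1}\phi_{ij1}\geq 0$ outright and then use the first-order condition $\nabla\phi_{11}/w = -D^2\phi\cdot\nabla\psi(\nabla\phi)$ to evaluate $\phi^{ij}\phi_{11i}\phi_{11j}/w^2$ exactly as $\nabla\psi\cdot D^2\phi\cdot\nabla\psi$, which produces the $-\tr\bigl((\nabla\psi\otimes\nabla\psi)\,D^2\phi\bigr)$ term in your final inequality. In the regime $|\nabla\phi(x_0)|<R$ one has $\nabla\psi(\nabla\phi)=\lambda_q\nabla\phi$, so this term is bounded below only by $-w\,|\nabla\psi|^2 \geq -w\,\lambda_q^2 R^2$. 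The coefficient of $w$ in your Pogorelov inequality then becomes, in the worst case $q_{11}(\nabla\phi)=-\lambda_q$,
\begin{equation*}
\lambda + q_{11} + \lambda_q - \lambda_q^2 R^2 \;\geq\; \lambda - \lambda_q^2 R^2,
\end{equation*}
and the right-hand side is nonpositive whenever $\lambda_q R\geq\sqrt{\lambda}$, which is allowed by the hypotheses; the resulting quadratic in $w$ gives no upper bound. The paper avoids this by never separating the two quantities: after multiplying by $\phi_{11}$, it groups
\begin{equation*}
\phi^{il}\phi^{kj}\phi_{1ij}\phi_{1kl} - \frac{\phi^{ij}\phi_{11i}\phi_{11j}}{\phi_{11}} = \sum_{i=1}^{n}\sum_{k=2}^{n}\phi^{ii}\phi^{kk}\phi_{1ik}^{2}\geq 0
\end{equation*}
and discards this combined nonnegative block, so the quantity $\nabla\psi\otimes\nabla\psi$ never appears, and the surviving Hessian term $\phi_{11}\sum_i \psi_{ii}(\nabla\phi)\phi_{ii}$ is controlled using only $\overline\psi''\geq -\lambda/2$. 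You should replace your two moves (drop the third-order term; evaluate $\phi^{ij}\phi_{11i}\phi_{11j}$ via the gradient condition) by this single cancellation; with that change, the rest of your outline closes and is indeed dimension-free.
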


\begin{proof}
Since, by assumption, $(x_0, \alpha_0)$ is a maximum point of $h$, we have $\sup_{|\a|=1} \phi_{\a\a}(x_0) = \phi_{\a_0\a_0}(x_0)$. 
This implies that $\a_0$ is an eigenvector of $D^2 \phi(x_0)$. 
Therefore, up to a rotation, we assume that $\a_{0} = \eone$ and that $D^{2}\phi$ is diagonal at $x_{0}$. Throughout this proof, the function $h$ is seen as a function of the variable $x$ with $\a_{0}$ fixed. Then, at $x_{0}$ we compute that
\begin{equation}
\label{eqn:1st der est (Gen)}
0 = (\log{h})_{i} = \frac{\phi_{11i}}{\phi_{11}} + \psi_{k}(\nab \phi)\phi_{ki},
\end{equation}
for all $1 \leq i \leq n$, and
\begin{equation}
\label{eqn:2nd der est (Gen)}
0 \geq \phi^{ij}(\log{h})_{ij} = \phi^{ij}\bigg{[}\frac{\phi_{11ij}}{\phi_{11}} - \frac{\phi_{11i}\phi_{11j}}{\phi_{11}^{2}} + \psi_{k}(\nab \phi)\phi_{kij} + \psi_{kl}(\nab \phi)\phi_{ik}\phi_{jl}\bigg{]}
\end{equation}
where we denote the inverse matrix of $(\phi_{ij})$ by $(\phi^{ij})$.

Let $\V^{\j} := V^{\j}-c_{q,\j}+q-c_{q}$. 
Using \eqref{eqn:der of det}, we differentiate the equation 
\begin{equation}
\label{eqn:MongeAmpere (Gen)}
\log{\det{D^{2}\phi}} = -V+\V^{\j}(\nab \phi)
\end{equation}
in the $\eone$-direction twice to obtain 
\[ \phi^{ij}\phi_{1ij} = -V_{1}+\V^{\j}_{i}(\nab \phi)\phi_{1i} \]
and
\begin{equation}
\label{eqn:2nd-deriv (Gen)}
\phi^{ij}\phi_{11ij} - \phi^{il}\phi^{kj}\phi_{1ij}\phi_{1kl} = -V_{11}+\V^{\j}_{i}(\nab \phi)\phi_{11i} + \V^{\j}_{ij}(\nab \phi)\phi_{1i}\phi_{1j}.
\end{equation}
By \eqref{eqn:2nd der est (Gen)} and \eqref{eqn:2nd-deriv (Gen)}, we deduce that at $x_0$
\begin{align}
\begin{split}
\label{eqn:pogo (Gen)}
0 \geq \phi^{il}\phi^{kj}\phi_{1ij}\phi_{1kl} &- V_{11} + \V^{\j}_{i}(\nab \phi)\phi_{11i} + \V^{\j}_{ij}(\nab \phi)\phi_{1i}\phi_{1j} \\
&- \frac{\phi^{ij}\phi_{11i}\phi_{11j}}{\phi_{11}} + \phi_{11}\phi^{ij}\psi_{k}(\nab \phi)\phi_{kij} + \phi_{11}\phi^{ij}\psi_{kl}(\nab \phi)\phi_{ik}\phi_{jl}.
\end{split}
\end{align}

We estimate each term in \eqref{eqn:pogo (Gen)} from below.
Recall that $(\phi_{ij})$ and $(\phi^{ij})$ are diagonal at $x_{0}$. Therefore, $\phi^{ii} = 1/\phi_{ii}$, and we see that
\[ \phi^{il}\phi^{kj}\phi_{1ij}\phi_{1kl} - \frac{\phi^{ij}\phi_{11i}\phi_{11j}}{\phi_{11}} = \sum_{i=1}^n \sum_{k=2}^n \phi^{ii}\phi^{kk}\phi_{1ik}^{2} \geq 0 \]
and
\[ \V^{\j}_{ij}(\nab \phi)\phi_{1i}\phi_{1j} = \V^{\j}_{11}(\nab \phi)\phi_{11}^{2}. \]
Because $h$ has a maximum at $\eone$ among all directions, 
\begin{equation}
\label{eqn:u11max-direct}
\phi_{11}(x_{0}) \geq \phi_{ii}(x_{0}),
\end{equation}
and so
\[ \phi_{11}\phi^{ij}\psi_{kl}(\nab \phi)\phi_{ik}\phi_{jl} = \phi_{11}\psi_{ii}(\nab \phi)\phi_{ii} \geq \psi_{ii}(\nab \phi)\phi_{ii}^{2}. \]
Additionally, differentiating \eqref{eqn:MongeAmpere (Gen)} in the $\text{e}_{k}$-direction, we have that 
\[ \phi^{ij}\phi_{kij} = -V_{k}+\V^{\j}_{i}(\nab \phi)\phi_{ki}. \]
By \eqref{eqn:1st der est (Gen)}, it then follows that 
\begin{align*}
\begin{split}
\V^{\j}_{i}(\nab \phi)\phi_{11i} + \phi_{11}\phi^{ij}\psi_{k}(\nab \phi)\phi_{kij} &= \V^{\j}_{i}(\nab \phi)\phi_{11i} + \psi_{k}(\nab \phi)(-V_{k}+\V^{\j}_{i}(\nab \phi)\phi_{ki})\phi_{11} \\
&= -\psi_{k}(\nab \phi)V_{k}\phi_{11},
\end{split}
\end{align*}
and, consequently, \eqref{eqn:pogo (Gen)} becomes
\begin{equation}
\label{eqn:pogo-with-work}
0 \geq  \V^{\j}_{11}(\nab \phi) \phi_{11}^{2} +\sum_{i = 1}^{n} \psi_{ii}(\nab \phi)\phi_{ii}^{2} -\psi_{k}(\nab \phi)V_{k}\phi_{11} -\L.
\end{equation}

If $x_{0} \in \R^n \setminus B_{P'}$, \eqref{eqn:R3-R2} implies that $\nab \phi(x_{0}) \in \R^{n} \setminus B_{P}$. Then, $\psi_{k}(\nab \phi)V_{k}\phi_{11} = 0$ since the gradient of $\psi$ is zero outside $B_{P}$ by construction. If, on the other hand, $x_{0} \in B_{P'}$, then 
\[ \psi_{k}(\nab \phi)V_{k}\phi_{11} \leq \L P' \|\nab \psi\|_{L^{\infty}(\R^{n})}\phi_{11}. \]
(Here, we have used that $V(0) = \inf_{\R^n} V$ and that $D^2 V \leq \L \Id$ to show $V_k$ is bounded above by $\L P'$.) In both cases, we deduce that
\[ \psi_{k}(\nab \phi)V_{k}\phi_{11} \leq \tilde{C}\phi_{11} \]
for a constant $\tilde{C}$ depending only on $R$, $P'$, $\l$, $\L$, and $\l_{q}$.
Thus, by \eqref{eqn:pogo-with-work}, we have that
\begin{equation}
\label{eqn:pogo-with-more-work}
0 \geq  \V^{\j}_{11}(\nab \phi) \phi_{11}^{2} +\sum_{i = 1}^{n} \psi_{ii}(\nab \phi)\phi_{ii}^{2} - \tilde{C}\phi_{11} - \L.
\end{equation}
We claim that
\begin{equation}
\label{eqn:piece-of-pogo-2}
\V^{\j}_{11}(\nab \phi) \phi_{11}^{2} +\sum_{i = 1}^{n} \psi_{ii}(\nab \phi)\phi_{ii}^{2} \geq \frac{\l}{2}\phi_{11}^{2}.
\end{equation}
Indeed, let us consider two cases, according to whether or not $\nab \phi (x_{0})$ belongs to $B_{R}$. If $\nab \phi(x_{0}) \in B_{R}$, then
%that $\l\Id \leq D^{2}V(x) \leq \L\Id$, for every $x \in \R^{n}$, $\|D^{2}q \|_{L^{\infty}(B_{R})} \leq \L_{q}$, and $\psi(x) = \L_{q}|x|^{2}/2$ in $B_{R}$ together imply that, 
\[ \V^{\j}_{11}(\nab \phi) \phi_{11}^{2} + \sum_{i = 1}^{n} \psi_{ii}(\nab \phi)\phi_{ii}^{2} \geq \l \phi_{11}^{2} - \l_{q}\phi_{11}^{2} + \l_{q} \phi_{11}^{2} = \l \phi_{11}^{2}, \]
and \eqref{eqn:piece-of-pogo-2} follows. In the case that $\nab \phi(x_{0}) \notin B_{R}$, we compute the derivatives of $\psi$ in terms of the derivatives of $\overline \psi$. Observe that
\[
\psi_{i}(y) = \frac{\overline{\psi}'(|y|)y_{i}}{|y|}
\qquad\text{and}\qquad
\psi_{ii}(y) = \overline{\psi}''(|y|)\frac{y_{i}^{2}}{|y|^{2}} + \frac{\overline{\psi}'(|y|)}{|y|}\bigg(1 -\frac{y_{i}^{2}}{|y|^{2}} \bigg).
\]
Thus,
% \[ \psi_{i}(\nab \phi) = \frac{\overline{\psi}'(|\nab \phi|)\phi_{i}}{|\nab \phi|} \]
% and
\[ \psi_{ii}(\nab \phi) 
% \overline{\psi}''(|\nab \phi|)  \frac{\phi_{i}^{2}}{|\nab \phi|^{2}} + \frac{\overline{\psi}'(|\nab \phi|)}{|\nab \phi|}\bigg(1 -\frac{\phi_{i}^{2}}{|\nab \phi|^{2}} \bigg) 
\geq \overline{\psi}''(|\nab \phi|)\frac{\phi_{i}^{2}}{|\nab \phi|^{2}} \geq -\frac{\l}{2}\frac{\phi_{i}^{2}}{|\nab \phi|^{2}} \]
since $\overline{\psi}'' \geq -\l/2$ in $[0,\infty)$ and $\overline{\psi}' \geq 0$.
Then, \eqref{eqn:u11max-direct} implies that
\begin{equation}
\label{eqn:phi11-outside}
\begin{split}
\sum_{i = 1}^{n} \psi_{ii}(\nab \phi)\phi_{ii}^{2}
% \geq \sum_{i = 1}^{n} \overline{\psi}''(|\nab \phi|) \frac{\phi_{i}^{2}}{|\nab \phi|^{2}} \phi_{ii}^{2} 
\geq -\frac{\l}{2} \sum_{i = 1}^{n} \frac{\phi_{i}^{2}}{|\nab \phi|^{2}} \phi_{ii}^{2}
\geq -\frac{\l}{2} \phi_{11}^{2}\sum_{i = 1}^{n} \frac{\phi_{i}^{2}}{|\nab \phi|^{2}} = -\frac{\l}{2}\phi_{11}^{2}.
\end{split}
\end{equation}
As $\nab \phi(x_{0}) \notin B_{R}$, we know $\V_{11}^{\j}(\nab \phi(x_{0})) = V_{11}^{\j}(\nab \phi(x_{0}))$. It follows that
\begin{equation}
\label{eqn:W11-outside}
\V^{\j}_{11}(\nab \phi)\phi_{11}^{2} \geq \l\phi_{11}^{2}.
\end{equation}
By \eqref{eqn:phi11-outside} and \eqref{eqn:W11-outside}, we deduce that \eqref{eqn:piece-of-pogo-2} holds in this case as well. 

Combining \eqref{eqn:pogo-with-more-work} and \eqref{eqn:piece-of-pogo-2}, we observe that
\begin{equation}
\label{eqn:quadratic}
0 \geq  \frac{\l}{2}\phi_{11}^{2} - \tilde{C}\phi_{11} - \L.
\end{equation}
Solving the quadratic equation in \eqref{eqn:quadratic}, we find that
\[ \phi_{11}(x_0) \leq 
\frac{\tilde{C} + \sqrt{\tilde{C}^{2}+ 2\l\L}}{\l} \leq 2\tilde{C}/\l + \sqrt{2\L/\l}. \]
As $\psi$ is bounded in $\R^{n}$ by definition, it follows that
\[ h(x) \leq h(x_{0}) \leq \phi_{11}(x_{0})e^{\|\psi\|_{L^\infty(\R^{n})}} \leq C \]
for a constant $C$ depending on $R$, $P'$, $\l$, $\L$, and $\l_{q}$, yet independent of $n$, as desired.
\end{proof}

Notice that if $\l_{q} = 0$, then $\overline{\psi} = 0$. 
In this case, the constant $\tilde{C}$ found in the proof above is zero, and we recover the global Lipschitz constant obtained by Caffarelli in Theorem~\ref{thm:Caff} up to a factor of $\sqrt{2}$ (this is a better bound than the one provided by the proof of Theorem~\ref{thm:Caff} before the final bootstrapping argument).

\begin{proof}[Proof of Theorem~\ref{thm:main}]
We first prove the statement assuming that $V$ and $q$ are smooth.
For every $\j > R$ set $V^{\j}$ as in \eqref{eqn:vj},
and choose $c_{q,\j} \in (0, \infty)$ such that $e^{c_{q,\j}-V^{\j}(x)+c_q-q(x)} \, dx \in \Prob(\R^{n})$.
Let $T^{\j}$ be the optimal transport map that takes $e^{-V(x)} \, dx$ to $e^{c_{q,\j}-V^{\j}(x)+c_{q}-q(x)} \, dx$. Since the density $e^{c_{q,\j}-V^{\j}+c_{q}-q}$ is supported in a convex set,  smooth on its support, and is bounded from above and below by positive constants, by Theorem~\ref{thm:Caff-reg}, we deduce that $T^{\j} \in C^{\infty}(\R^{n})$. By the stability of optimal transport maps, it suffices to show that for all $\j \geq \j'$ ($\j'$ to be chosen possibly depending on $n$) we have that 
\begin{equation}
\label{eqn:ts-rho}
\| \nab T^{\j}\|_{L^\infty(\R^{n})} \leq C
\end{equation}
for some constant $C > 0$ depending only on $R$, $\l$, $\L$, and $\l_{q}$.

Let $P,\, \psi$, and $h$ be defined as in  \eqref{defn:P}, \eqref{defn:psii}, and \eqref{defn:hh}. Applying Lemma~\ref{lemma:R-image-T-est-V-cut} to the optimal transport $[T^{\j}]^{-1}$, we see that there exist constants $\j'$ and $P' = P'(R,\l,\L,\l_q) > 0$ (see Remark~\ref{rmk:l-infty-q}) such that $[T^{\j}]^{-1}(B_{P}) \subseteq B_{P'}$ for all $\j \in [\j', \infty]$; that is, letting $\nab \phi = T^{\j}$ (for simplicity we omit in $\phi$ the dependence on $j$, which can be any number greater than $j'$ in the following),
\begin{equation}
\label{eqn:R3-R2-applied}
\nab \phi(\R^{n} \setminus B_{P'}) \subseteq \R^{n} \setminus B_{P}.
\end{equation}

We split the proof in two cases, according whether or not $h$ achieves a maximum in $\Om = \R^{n} \times \mathbb{S}^{n-1}$. If there exists $(x_{0}, \a_{0}) \in \Om$ such that
\[ h(x_{0}, \a_{0}) = \sup_{\Om} h(x, \a), \]
then we apply Proposition~\ref{prop:pogo} and see that
\[ \sup_{\mathbb{S}^{n-1}} \|\phi_{\a\a}\|_{L^{\infty}(\R^{n})} \leq \|h\|_{L^\infty(\Om)} \leq C, \]
which proves \eqref{eqn:ts-rho}.

Otherwise, we consider the maxima of $h$ in $\Om_{m} :=\overline{B}_{m} \times \mathbb{S}^{n-1}$ with $m \in \N$. Let
\[ h (x_{m}, \a_{m}) =  \sup_{\Om_{m}} h(x, \a). \]
Notice that $h(x_{m}, \a_{m})$ is nondecreasing (and not definitively constant) and $|x_{m}| \uparrow \infty$ as $m \to \infty$. Now, consider the functions $h^{\ep}$ approximating $h$ defined by
\[ h^\ep(x, \a) := [\phi(x+\ep\a) + \phi(x-\ep\a) - 2\phi(x)]e^{\psi(\nab \phi(x))} \qquad \forall\,(x, \a) \in \Om. \]
Since $\phi$ is smooth, we know that $h^{\ep} \to h$ locally uniformly in $\Omega$ as $\ep \to 0$. Furthermore, by Lemma~\ref{lemma:finite max (Gen)}, 
\begin{equation}
\label{eqn:conv-h-eps}
\lim_{|x| \to \infty} h^{\ep}(x, \a) = 0
\end{equation}
uniformly with respect to $x$ and $\a$. Since $h^\ep \geq 0$ (by the convexity of $\phi$), the function $h^{\ep}(x, \a)$ has a finite maximum point $(x^{\ep}, \a^{\ep})$. 

We claim that for sufficiently small $\ep$ (possibly depending on $n$ and on the sequence $\{(x_{m}, \a_{m})\}_{m \in \N}$)
\begin{equation}
\label{eqn:max-outside-deg-eps}
x^{\ep} \notin B_{P'}.
\end{equation}
Indeed, let $m_{0}$ and $m_{1}$ be such that $x_{m_{0}} \notin B_{P'}$ and $h (x_{m_{1}}, \a_{m_{1}}) > h (x_{m_{0}}, \a_{m_{0}})$. %Thanks to \eqref{eqn:conv-h-eps}, 
Since $h^\ep$ converges to $h$ locally uniformly, there exists $\ep_{0} > 0$ such that
%\begin{equation}
%\label{eqn:h-eps-h-close}
\[ |h^{\ep}(x, \a)-h(x, \a)| \leq \frac{h(x_{m_{1}}, \a_{m_{1}}) - h(x_{m_{0}}, \a_{m_{0}})}{4} \]
for every $x \in \overline{B}_{|x_{m_{1}}|+1}$, $\a \in \mathbb{S}^{n-1}$, and $\ep \leq \ep_{0}$.
%\end{equation}
%By \eqref{eqn:h-eps-h-close} 
So, for every $\ep\leq \ep_{0}$, we have that
\begin{equation}
\label{eqn:high-at-m-1}
h^{\ep}(x_{m_{1}}, \a_{m_{1}}) \geq h(x_{m_{1}}, \a_{m_{1}}) - |h^{\ep}(x_{m_{1}}, \a_{m_{1}}) - h(x_{m_{1}}, \a_{m_{1}})| \geq \frac{3h(x_{m_{1}}, \a_{m_{1}}) + h(x_{m_{0}}, \a_{m_{0}})}{4}.
\end{equation}
Thus,
\begin{equation}
\label{eqn:low-in-Bm0}
\begin{split}
h^{\ep}(x, \a) &\leq h(x, \a) + |h^{\ep}(x, \a) - h(x, \a)|
\leq h(x_{m_{0}}, \a_{m_{0}}) + \frac{h(x_{m_{1}}, \a_{m_{1}}) - h(x_{m_{0}}, \a_{m_{0}})}{4} \\
&=  \frac{h(x_{m_{1}}, \a_{m_{1}}) + 3h(x_{m_{0}}, \a_{m_{0}})}{4} < \frac{3h(x_{m_{1}}, \a_{m_{1}}) + h(x_{m_{0}}, \a_{m_{0}})}{4}
\end{split}
\end{equation}
for every $x \in \overline{B}_{|x_{m_{0}}|}$, $\a \in \mathbb{S}^{n-1}$, and $\ep \leq \ep_{0}$.
Since $B_{P'} \subseteq B_{|x_{m_{0}}|}$, \eqref{eqn:high-at-m-1} and \eqref{eqn:low-in-Bm0} imply that $h^{\ep}(x, \a) \leq h^{\ep}(x_{m_{1}}, \a_{m_{1}})$ in $B_{P'}$.
Therefore, $h^{\ep}$ satisfies \eqref{eqn:max-outside-deg-eps} for every $\ep \leq \ep_{0}$.  

Recall that $\psi$ is constant outside $B_{P}$. Then, by \eqref{eqn:R3-R2-applied} and \eqref{eqn:max-outside-deg-eps}, we know that for every $\ep \leq \ep_{0}$, the function $e^{\psi(\nab \phi(x))} $ is locally constant around $x^{\ep}$. Therefore, $(x^{\ep}, \a^{\ep})$ is also a local maximum point for the incremental quotient $\phi(x+\ep\a)+\phi(x-\ep\a)-2\phi(x)$. 
Moreover, outside $B_{R}$ the function $V^{\j}-c_{q,\j}+q-c_{q}$ is convex as it coincides with $V^{\j}-c_{q,\j}-c_{q}$. So, proceeding as in the proof of Theorem~\ref{thm:Caff} (cf. Remark~\ref{rmk:local}), we conclude that \eqref{eqn:ts-rho} is also proved in the case that $h$ is not guaranteed to achieve a maximum in $\Om$.

In order to remove the smoothness assumptions on $V$ and $q$, we approximate $V$ and $q$ by convolution (adding a small constant to ensure these approximations define probability measures). Then, from what we have shown above, the approximate transports are all globally and uniformly Lipschitz. Thanks to the stability of optimal transports, passing to the limit, we prove \eqref{ts:lip-transp}.
\end{proof}

\section{Bounded Perturbations in $1$-Dimension and in the Radially Symmetric Case: Proofs of Theorems~\ref{1-D noncompact} and~\ref{thm:sym}}
\label{sec:Bounded Perturbations}

Our goal now is to produce optimal global Lipschitz estimates under strong symmetry but weak regularity assumptions on our log-concave measures. Notice that when our perturbation is zero, we recover that our optimal transport is the identity map (cf. Remark~\ref{rmk: Caff no ideal}). We begin in $1$-dimension and with a technical lemma relating the behavior of our convex base and the cumulative distribution function of the log-concave probability measure it defines.

\begin{lemma}
\label{lemma:log-V}
Let $V : \R \to \R$ be a convex function such that $e^{-V(x)} \,dx \in \Prob(\R)$ and $x_{0} \in \R$ be such that $V(x_{0}) = \inf_{\R} V$. Define $\Phi$, $\Psi : \R \to (0,1)$ by
\begin{equation}
\label{defn:Phi}
\Phi(x) := \int_{-\infty}^{x} e^{-V(t)} \,dt \quad\text{ and } \quad\Psi(x) := \int_{x}^{\infty} e^{-V(t)} \,dt=1-\Phi(x).
\end{equation}
Then,
\begin{equation}
\label{logPhi V ineq}
V(x)-V(y) \leq \log\Phi(y)-\log\Phi(x) \qquad \forall \,x \leq y \leq x_{0}
\end{equation}
and
\begin{equation}
\label{logPsi V ineq}
V(x)-V(y) \geq \log\Psi(y)-\log\Psi(x) \qquad \forall \,x_{0} \leq x \leq y.
\end{equation}
\end{lemma}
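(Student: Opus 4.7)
The plan is to recognize that both inequalities are equivalent to monotonicity statements for the auxiliary functions $x \mapsto \Phi(x)e^{V(x)}$ and $x \mapsto \Psi(x)e^{V(x)}$, which then reduce to sharp one-sided bounds that follow directly from the tangent-line characterization of convexity.

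For the first inequality, I would rewrite \eqref{logPhi V ineq} as
\[
\Phi(x)e^{V(x)} \leq \Phi(y)e^{V(y)} \qquad \forall\, x \leq y \leq x_0,
\]
so the claim becomes that $G(x) := \Phi(x)e^{V(x)}$ is nondecreasing on $(-\infty, x_0]$. Since $V$ is convex, it is locally Lipschitz and admits a left derivative $V'(x)$ everywhere; moreover $V'(x) \leq 0$ for $x \leq x_0$. A direct differentiation gives
\[
G'(x) = 1 + \Phi(x) V'(x) e^{V(x)} = 1 - |V'(x)|\,\Phi(x)\, e^{V(x)},
\]
so it suffices to verify the pointwise bound $|V'(x)|\,\Phi(x) \leq e^{-V(x)}$ for every $x \leq x_0$. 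Similarly, for the second inequality I would reformulate \eqref{logPsi V ineq} as the statement that $H(x) := \Psi(x)e^{V(x)}$ is nonincreasing on $[x_0, \infty)$ and reduce it to $V'(x)\,\Psi(x) \leq e^{-V(x)}$ for $x \geq x_0$ (using the right derivative, which is $\geq 0$ there).

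To prove the first pointwise bound, for $x \leq x_0$ with $V'(x) < 0$ I would use the tangent-line inequality $V(t) \geq V(x) + V'(x)(t-x)$, valid for all $t \in \R$ by convexity, to estimate
\[
\Phi(x) = \int_{-\infty}^{x} e^{-V(t)} \, dt \leq e^{-V(x)} \int_{-\infty}^{x} e^{-V'(x)(t-x)} \, dt = \frac{e^{-V(x)}}{|V'(x)|},
\]
where the last integral converges because $V'(x) < 0$. If instead $V'(x) = 0$, then $|V'(x)|\,\Phi(x) = 0$ and the bound is trivial. This yields $G'(x) \geq 0$ a.e. on $(-\infty, x_0]$, and since $G$ is continuous (in fact locally absolutely continuous), the desired monotonicity follows. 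An entirely symmetric calculation — replacing $(-\infty, x]$ by $[x, \infty)$ and using the tangent line at $x$ with right derivative — gives the analogous bound $V'(x)\,\Psi(x) \leq e^{-V(x)}$ on $[x_0, \infty)$ and hence $H' \leq 0$ there, proving \eqref{logPsi V ineq}.

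I do not expect any serious obstacle: the only mild subtlety is that $V$ need not be $C^1$, but the convex subgradient provides a valid tangent-line inequality at every point, and working with the left derivative on $(-\infty,x_0]$ and the right derivative on $[x_0,\infty)$ is enough since each is monotone and thus the auxiliary functions are absolutely continuous, so a.e. differentiation is sufficient to conclude monotonicity.
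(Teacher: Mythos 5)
Your proposal is correct and follows essentially the same strategy as the paper: both reduce \eqref{logPhi V ineq} to the claim that $\Phi e^{V}$ (equivalently $\log\Phi + V$) is nondecreasing on $(-\infty, x_0]$, compute the derivative, and establish a pointwise lower bound of the form $V'(x)\Phi(x) \geq -e^{-V(x)}$ using convexity. The only small difference is in how convexity is invoked for that bound: you insert the tangent-line estimate $V(t)\geq V(x)+V'(x)(t-x)$ under the integral defining $\Phi(x)$, whereas the paper replaces $V'(x)$ by $V'(t)$ inside $\int_{-\infty}^x V'(t)e^{-V(t)}\,dt = -e^{-V(x)}$ using monotonicity of $V'$; these are two faces of the same convexity fact and the computations are equally short. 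Your treatment of the degenerate case $V'(x)=0$ pointwise is a minor clean-up compared to the paper's split at $\hat{x}=\inf\{x:V(x)=V(x_0)\}$, but the substance is the same.
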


\begin{proof}
Since an analogous argument proves \eqref{logPsi V ineq}, we only show \eqref{logPhi V ineq}; in other words, we prove that the function $\log \Phi + V$ is nondecreasing in $(-\infty, x_0]$. 
Let $\hat{x} = \inf\{x:V(x)=V(x_0)\}$. The function $\log \Phi + V$ is clearly nondecreasing in $[\hat{x},x_0]$, whenever this interval is not a single point. Moreover, it is locally Lipschitz and its derivative is $e^{-V}/ \Phi + V'$. Hence, it suffices to show that the derivative is nonnegative in $(-\infty, \hat{x})$. Since $V'$ is nonincreasing in $(-\infty, \hat{x})$ and by the change of variables formula, we have that for a.e. $x\in (-\infty, \hat{x})$
$$V'(x)\Phi(x) \geq \int_{-\infty}^x V'(t) e^{-V(t)} \, dt =- e^{-V(x)},$$
which proves our claim.
\end{proof}

\begin{proof}[Proof of Theorem~\ref{1-D noncompact}]
By approximating $V$ with a sequence of convex functions $V_j \to V$ such that $e^{-V_{j}(x)} \, dx \in \mathcal{P}(\R)$ and that are finite on $\R$, we can assume that $V<\infty$ on $\R$. This reduction follows from the stability of optimal transport maps.  Recall that, as a consequence of the push-forward condition $T_\# \bigl(e^{-V(x)}\,dx\bigr)=e^{-V(x)-q(x)}\,dx$, $T$ satisfies the mass balance equation
\begin{equation}
\label{mass balance 1}
\int_{-\infty}^{x} e^{-V(t)} \,dt = \int_{-\infty}^{T(x)} e^{-V(t)-q(t)} \,dt,
\end{equation}
which can be also written as
\begin{equation}
\label{mass balance 2}
\int_{x}^{\infty} e^{-V(t)} \,dt = \int_{T(x)}^{\infty} e^{-V(t)-q(t)} \,dt
\end{equation}
since the measures $e^{-V(x)} \,dx$ and $e^{-V(x)-q(x)} \,dx$ have total mass $1$.
From \eqref{mass balance 1}, we deduce that $T$ is differentiable. 
% on $\{ x : V(x) < \infty \}$ and constant on each connected component of $\{ x : V(x) = \infty \}$. Indeed, on $\{ x : V(x) < \infty \}$ both the functions
Indeed, both the functions 
\[ F(x) := \int_{-\infty}^{x} e^{-V(t)} \,dt \qquad\text{and}\qquad G(x) := \int_{-\infty}^{x} e^{-V(t)-q(t)} \,dt \] 
are differentiable and their derivatives do not vanish. So, $T(x) = G^{-1} \circ F(x)$ is differentiable as well. Thus, differentiating with respect to $x$ and then taking the logarithm shows that
\begin{equation*}
%\label{eqn:MA-1dim}
\log(T'(x)) = -V(x)+V(T(x))+q(T(x)) \qquad \forall \,x\in \R.
\end{equation*}
Consequently,
\begin{equation}
\label{logMA with work}
V(T(x))-V(x)-\|q^{-}\|_{L^{\infty}(\R)} \leq \log(T'(x)) \leq V(T(x))-V(x)+\|q^{+}\|_{L^{\infty}(\R)}.
\end{equation}
On the other hand, \eqref{mass balance 1} implies that
\[ e^{-\|q^{+}\|_{L^{\infty}(\R)}} \int_{-\infty}^{T(x)} e^{-V(t)} \,dt \leq \int_{-\infty}^{x} e^{-V(t)} \,dt \leq e^{\|q^{-}\|_{L^{\infty}(\R)}} \int_{-\infty}^{T(x)} e^{-V(t)} \,dt \]
since $q \in L^{\infty}(\R)$.
Taking the logarithm and defining $\Phi$ as in \eqref{defn:Phi}, we see that
\begin{equation}
\label{eqn:logPhi-q ineq}
-\|q^{+}\|_{L^{\infty}(\R)} \leq \log \Phi(x) - \log \Phi(T(x)) \leq \|q^{-}\|_{L^{\infty}(\R)}.
\end{equation}
Analogously, from \eqref{mass balance 2}, we deduce that
\begin{equation}
\label{eqn:logPsi-q ineq}
-\|q^{+}\|_{L^{\infty}(\R)} \leq \log \Psi(x) - \log \Psi(T(x)) \leq \|q^{-}\|_{L^{\infty}(\R)}.
\end{equation}
We claim that
\begin{equation}
\label{eqn:Vx-VTx}
-\|q^{+}\|_{L^{\infty}(\R)} \leq V(T(x))-V(x) \leq \|q^{-}\|_{L^{\infty}(\R)} \qquad \forall \,x \in \R.
\end{equation}
To prove this claim, let $x_{0} \in \R$ be such that $V(x_{0}) = \inf_{\R} V$ and consider the sets
\[ E_{1} := \{ x : x \leq x_{0} \text{ and } T(x) \leq x_{0} \}
\qquad\text{and}\qquad
E_{2} := \{ x : x \geq x_{0} \text{ and } T(x) \geq x_{0} \}. \]
Applying \eqref{logPhi V ineq} in $E_{1}$ yields that 
\[ 0 \leq V(T(x))-V(x) \leq \log \Phi(x) - \log \Phi(T(x)) \] 
if $T(x) \leq x \leq x_{0}$ and   
\[ \log \Phi(x) - \log \Phi(T(x)) \leq V(T(x))-V(x) \leq 0 \]
whenever $x \leq T(x) \leq x_{0}$.
Therefore, \eqref{eqn:Vx-VTx} holds in $E_{1}$ by \eqref{eqn:logPhi-q ineq}. Similarly, applying \eqref{logPsi V ineq} gives us that \eqref{eqn:Vx-VTx} holds in $E_{2}$ by \eqref{eqn:logPsi-q ineq}. Now, we consider three cases: 

1. If $T(x_{0}) = x_{0}$, the monotonicity of $T$ implies that $E_{1} \cup E_{2} = \R$, and \eqref{eqn:Vx-VTx} holds in all of $\R$. 

2. If $T(x_{0}) > x_{0}$, then $E_{1} \cup E_{2} \cup E_{+} = \R$ where, thanks to the monotonicity of $T$, we have 
\[ E_{+} = \{x : x \leq x_{0} \text{ and } T(x) \geq x_{0} \} = [T^{-1}(x_{0}), x_{0}]. \]
Since $V$ attains its minimum at $x_{0}$, $V$ is decreasing on $(-\infty, x_{0}]$ and increasing on $[x_{0}, \infty)$. Consequently, 
\[ V(T^{-1}(x_{0}))-V(x_{0}) \leq V(x)-V(T(x)) \leq V(T(x_{0}))-V(x_{0}) \qquad \forall \,x \in E_{+}. \]
As $T^{-1}(x_{0}) \in E_{1}$ and $x_{0} \in E_{2}$, our above analysis shows that \eqref{eqn:Vx-VTx} holds in $E_{+}$.

3. If $T(x_{0}) < x_{0}$, an analogous argument to one used to prove case 2 demonstrates that $E_{1} \cup E_{2} \cup E_{-} = \R$ where $E_{-} = [x_{0}, T^{-1}(x_{0})]$ and proves \eqref{eqn:Vx-VTx} also in $E_{-}$.

Therefore, by \eqref{logMA with work} and \eqref{eqn:Vx-VTx}, we deduce \eqref{ts: T-lip-q}.
\end{proof}

\begin{remark}
\label{rmk:lin-est} 
From the numerical inequality $|\log(x)| \geq x-1$, which holds for $x\in [0,e^2]$, we see that if $\phi$ is the potential associated to $T$ in Theorem~\ref{1-D noncompact}, then provided that $\|q\|_{L^{\infty}(\R)} \leq 1$, there exists a constant $C > 0$ such that
\begin{equation*}
\label{eqn:ts-linearized}
\|\phi''-1\|_{L^{\infty}(\R)} \leq C\|q\|_{L^{\infty}(\R)}.
\end{equation*}
\end{remark}

We now move to the radially symmetric case in $n$-dimensions.

\begin{proof}[Proof of Theorem~\ref{thm:sym}]
Let $\overline V, \, \overline q: \R \to \R \cup \{ \infty\}$ be two functions such that
$\overline V=\overline q=\infty$ on $(-\infty,0)$, and $V(x) = \overline V (|x|)$ and $q(x) = \overline q(|x|)$ for every $x\in \R^n$.  Now, consider the function
\[ T(x) := \tilde{T}(|x|)\frac{x}{|x|} \]
where $\tilde{T}:\R\to\R$ is the optimal transport that takes $e^{-\overline V(r)}r^{n-1} \,dr$ to $e^{-\overline V(r)-\overline q(r)}r^{n-1} \,dr$. 

Set $\R^+:=[0,\infty)$.
We first claim that the optimal transport $\tilde T$ %that takes $e^{-\overline V(r)}r^{n-1} \,dr$ to $e^{-\overline V(r)-\overline q(r)}r^{n-1} \,dr$ 
is Lipschitz and satisfies
\begin{equation}
\label{ts: T-lip-p-radial}
\|\log \tilde T'\|_{L^{\infty}(\R^{+})} \leq \|q^{+}\|_{L^{\infty}(\R^{+})}+\|q^{-}\|_{L^{\infty}(\R^{+})}.
\end{equation}
Indeed, let $\tilde{V} : \R \to \R \cup \{ \infty \}$ be defined by $\tilde{V}(r) = \overline V(r) - (n-1)\log r$ on $\R^{+}$ and infinity otherwise, and let $\tilde{q} =\overline q$ on $\R^{+}$ and zero elsewhere. Observe that $\tilde{V}$ is convex and $\tilde{q}$ is bounded. Hence, applying Proposition~\ref{1-D noncompact} with $V = \tilde{V}$ and $q = \tilde{q}$ proves \eqref{ts: T-lip-p-radial}.

We now conclude the proof.
Notice that $T$ is continuous. Furthermore, $T$ is an admissible change of variables from $e^{-V(x)} \,dx$ to $e^{-V(x)-q(x)} \,dx$. To see this, we show that for every bounded, Borel function $\varphi : \R^{n} \to \R$,
\begin{equation}
\label{eqn:cdv-radial}
\int_{\R^{n}} \varphi(T(x)) e^{-V(x)} \, dx = \int_{\R^{n}} \varphi(x) e^{-V(x)-q(x)} \, dx.
\end{equation}
The formula \eqref{eqn:cdv-radial} can be rewritten, using polar coordinates and the definition of $T$, as
\[ \int_{0}^{\infty} \int_{\mathbb{S}^{n-1}} \varphi(\tilde{T}(r)\a)\, d \H^{n-1}(\a)\, e^{-\overline V(r)}r^{n-1} \, dr =\int_{0}^{\infty} \int_{\mathbb{S}^{n-1}} \varphi(r \a)\, d \H^{n-1}(\a)\, e^{-\overline V(r)-\overline q(r)}r^{n-1} \, dr, \]
which is, in turn, satisfied if we use the test function $\overline \varphi(r) = \int_{\mathbb{S}^{n-1}} \varphi(r\a) \, d \H^{n-1}(\a)$ and recall the definition of $T$. 

Now, let $\xi\in \mathbb{S}^{n-1}$ and $x \in \R^{n}\setminus \{0\}$. Since $\tilde{T}(0)=0$, we observe that
\begin{equation*}
\begin{split}
\nab T(x)[\xi] &= \big{[} \xi|x|^{-1} - x|x|^{-3}\langle x, \xi \rangle\big{]}\tilde{T}(|x|) + x|x|^{-2}\tilde{T}'(|x|)\langle x, \xi \rangle \\
%&= \big{[} \xi|x|^{-1} - x|x|^{-3}\langle x, \xi \rangle\big{]}\tilde{T}'(t)|x| + x|x|^{-2}\tilde{T}'(|x|)\langle x, \xi \rangle \\
&= \big{[} \xi - x|x|^{-2}\langle x, \xi \rangle\big{]}\tilde{T}'(t) + x|x|^{-2}\tilde{T}'(|x|)\langle x, \xi \rangle
\end{split}
\end{equation*}
where $t \in (0, |x|)$. By \eqref{ts: T-lip-p-radial}, we deduce that
\[ e^{-\|q^{+}\|_{L^{\infty}(\R^{n})}-\|q^{-}\|_{L^{\infty}(\R^{n})}} \leq \langle \xi, \nab T(x)[\xi] \rangle \leq e^{\|q^{+}\|_{L^{\infty}(\R^{n})}+\|q^{-}\|_{L^{\infty}(\R^{n})}}, \]
which proves \eqref{ts: S-lip}. To conclude, we show that $T$ is the optimal transport taking $e^{-V(x)} \,dx$ to $e^{-V(x)-q(x)} \,dx$. Let $\tilde\phi : \R^{+} \to \R^{+}$ be the convex potential associated to $\tilde{T}$. By construction, $T(x) = \nab(\tilde\phi(|x|))$ and $\tilde\phi(|x|)$ is a convex function. Since optimal transports are characterized by being gradients of convex functions, $T$ is the optimal transport taking $e^{-V(x)} \,dx$ to $e^{-V(x)-q(x)} \,dx$.
\end{proof}

\section{Appendix}
\label{sec:Appendix}

We now show that the linear bound in Remark \ref{rmk:lin-est} is specific to the $1$-dimensional case.

\begin{proposition}
Let $n \in \N$ and $V(x) = |x|^{2}/2+(n/2)\log (2\pi)$, so that $e^{-V}$ is the standard Gaussian density in $\R^n$. Then, for every $C > 0$, there exists a bounded, continuous perturbation $p$ such that $\|p\|_{L^{\infty}(\R^{n})} \leq 1$ and $e^{-V(x)-p(x)} \, dx \in \Prob(\R^n)$ and the optimal transport $T = \nab \phi$ that takes $e^{-V(x)} \, dx$ to $e^{-V(x)-p(x)} \, dx$ satisfies 
\begin{equation*}
\|D^{2}\phi-\Id\|_{L^{\infty}(\R^{n})} > C\|p\|_{L^{\infty}(\R^{n})}.
\end{equation*}
\end{proposition}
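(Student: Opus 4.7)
The plan is to formalize the linearization heuristic already sketched in the introduction. Arguing by contradiction, suppose there exists $C_0>0$ such that for every bounded continuous $p$ with $\|p\|_{L^\infty(\R^n)} \leq 1$ and $e^{-V-p}\,dx \in \Prob(\R^n)$, the Brenier potential $\phi_p$ satisfies $\|D^2\phi_p - \Id\|_{L^\infty(\R^n)} \leq C_0\|p\|_{L^\infty(\R^n)}$. Fix an arbitrary $q \in C_c^\infty(\R^n)$ and, for small $\ep > 0$, set $p_\ep := \ep q + c_\ep$, with $c_\ep := \log \int e^{-V - \ep q}\,dx = O(\ep)$, so that $e^{-V-p_\ep}\,dx \in \Prob(\R^n)$. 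Writing $\phi_\ep$ for the corresponding Brenier potential, the standing assumption gives $\|D^2\phi_\ep - \Id\|_{L^\infty(\R^n)} = O(\ep)$.

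Introduce the rescaling
\[
\psi_\ep(x) := \frac{\phi_\ep(x) - |x|^2/2 - a_\ep - \langle b_\ep, x\rangle}{\ep},
\]
with $a_\ep, b_\ep$ chosen so that $\psi_\ep(0) = 0$ and $\nab\psi_\ep(0) = 0$. Comparing the first two moments of the push-forward identity $(\nab\phi_\ep)_\#\gamma_n = e^{-V-p_\ep}\,dx$, together with the uniform Lipschitz bound on $\nab\phi_\ep - \Id$, yields $a_\ep, b_\ep = O(\ep)$. The Hessian bound reads $\|D^2\psi_\ep\|_{L^\infty(\R^n)} \leq C_0\|q\|_{L^\infty(\R^n)} + o(1)$, so $\{\psi_\ep\}$ is equi-$C^{1,1}$ and locally equibounded; Arzelà–Ascoli produces a subsequence $\psi_\ep \to \psi$ in $C^1_{\rm loc}(\R^n)$, with $\psi \in C^{1,1}(\R^n)$ and $\|D^2\psi\|_{L^\infty(\R^n)} \leq C_0\|q\|_{L^\infty(\R^n)}$.

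Dividing the Monge–Ampère identity $\log\det D^2\phi_\ep = -|x|^2/2 + |\nab\phi_\ep|^2/2 + \ep q(\nab\phi_\ep) + c_\ep$ by $\ep$ and using the expansions
\[
\frac{\log\det(\Id + \ep D^2\psi_\ep)}{\ep} = \Delta\psi_\ep + O(\ep), \qquad \frac{|\nab\phi_\ep|^2 - |x|^2}{2\ep} = \langle x, \nab\psi_\ep\rangle + O(\ep),
\]
valid thanks to the uniform Hessian bound, I pass to the limit and obtain that $\psi$ is a global $C^{1,1}$ solution of the Ornstein–Uhlenbeck equation
\[
\Delta\psi(x) - \langle x, \nab\psi(x)\rangle = q(x) + c_0, \qquad x \in \R^n,
\]
where $c_0 := \lim \, c_\ep/\ep$, still with $\|D^2\psi\|_{L^\infty(\R^n)} \leq C_0\|q\|_{L^\infty(\R^n)}$. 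By density, the same bound would extend to every bounded continuous $q$.

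The final step is to exhibit a bounded continuous $q$ for which this is impossible, reducing to the classical failure of $L^\infty \to C^{1,1}$ estimates for the Poisson equation in dimension $n \geq 2$. Take $u(x) := \eta(x)(x_1^2 - x_2^2)\log\log(1/|x|)$ with $\eta \in C_c^\infty(\R^n)$ equal to $1$ near the origin; then $\Delta u \in C(\R^n) \cap L^\infty(\R^n)$ while $D^2 u$ blows up at $0$. Set $q(x) := \Delta u(x) - \langle x, \nab u(x) \rangle$, which is bounded and continuous. Any alleged $C^{1,1}$ solution $\psi$ of the OU equation with right-hand side $q$ would force $w := \psi - u$ to satisfy the homogeneous equation $\Delta w - \langle x, \nab w \rangle = 0$ in the distributional sense near the origin; interior elliptic regularity then gives $w \in C^\infty$ near $0$, so $D^2\psi = D^2 u + D^2 w$ inherits the singularity of $D^2 u$, contradicting $\psi \in C^{1,1}$. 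The main obstacle is this last step: verifying rigorously that the elliptic regularity argument applies in the full $C^{1,1}$ class, and checking that all the $O(\ep)$ error terms coming from the log-determinant expansion and from the normalizations of $a_\ep, b_\ep, c_\ep$ vanish uniformly enough on large balls to legitimize the pointwise derivation of the limiting OU equation.
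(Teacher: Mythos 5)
Your proposal follows essentially the same strategy as the paper (contradiction, linearization $\psi_\ep = (\phi_\ep - |x|^2/2)/\ep$, passage to a limiting Ornstein--Uhlenbeck equation, and the classical failure of $L^\infty\to C^{1,1}$ Schauder estimates for $n\geq 2$), but it handles the one genuinely delicate point --- compactness of $\{\psi_\ep\}$ --- in a different and arguably cleaner way. The paper normalizes only $\phi_\ep(0)=0$ and then proves $\liminf_\ep |\nab\psi_\ep(0)|<\infty$ by a test-function argument: integrating the equation \eqref{eqn:FTC} against a bump $\eta$ supported near $\a_0$ and showing that $|\nab\psi_\ep(0)|\to\infty$ would make the left-hand side blow up while the right-hand side stays bounded. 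You instead subtract an affine function so that $\psi_\ep(0)=0$ and $\nab\psi_\ep(0)=0$ by design, and bound $b_\ep=\nab\phi_\ep(0)$ by comparing first moments of the push-forward: since both measures have vanishing mean up to $O(\ep)$ and $\nab\phi_\ep(x)-x = b_\ep + O(\ep|x|)$ uniformly, one gets $b_\ep=O(\ep)$ directly. This is a shorter, mass-transport-based argument and avoids the auxiliary integration against $\eta$.

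Two points should be made explicit. First, $b_\ep/\ep$ need not tend to zero, only stay bounded; along a subsequence $b_\ep/\ep\to b_0$, and your expansion of $(|\nab\phi_\ep|^2-|x|^2)/(2\ep)$ produces an extra term $\langle x,b_0\rangle$ on the right-hand side of the limiting equation, which you have dropped. This is harmless --- replacing $\psi$ by $\psi+\langle b_0,x\rangle$ removes it, or one can simply carry the (smooth, locally bounded) extra affine term through to the final contradiction, where it plays the same role as the constant $\iota_q$ --- but it should be stated rather than absorbed into a vague ``$O(\ep)$.'' Second, restricting to $q\in C_c^\infty$ and then invoking density is unnecessary: your derivation of the limit equation only uses that $q$ is bounded and continuous (so that $q(\nab\phi_\ep)\to q$ locally uniformly), so one can take $q\in L^\infty\cap C^0$ from the start, exactly as the paper does, and skip the density step entirely. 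With these two clarifications the argument is sound and, for the compactness step, offers a nice alternative to the paper's.
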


\begin{proof}
Suppose, to the contrary, that for every bounded, continuous function $p:\R^n\to \R$ with $\|p\|_{L^{\infty}(\R^{n})} \leq 1$, the optimal transport $T = \nab \phi$ that takes $e^{-V(x)} \, dx$ to $e^{-V(x)-p(x)} \, dx$ satisfies 
\begin{equation}
\label{eqn:1-to-n}
\|D^{2}\phi-\Id\|_{L^{\infty}(\R^{n})} \leq C_0\|p\|_{L^{\infty}(\R^{n})}
\end{equation}
for some $C_0 > 0$.
In particular, let $q \in L^{\infty}(\R^{n}) \cap C^0(\R^n)$, and for all $\ep \geq 0$, define $c_{\ep}$ by
\begin{equation*}
\label{eps-cts} 
e^{c_{\ep}} = \int_{\R^{n}} e^{-V(x)-\ep q(x)} \, dx.
\end{equation*}  
By construction, $e^{-V(x)-\ep q(x)-c_{\ep}} \, dx \in \mathcal{P}(\R^n)$.
Thus, let $\phi_{\ep}$ be the potential associated to the optimal transport that takes $e^{-V(x)} \, dx$ to $e^{-V(x)-\ep q(x)-c_{\ep}} \, dx$, and remember that $\phi_{\ep}$ solves the Monge-Amp\`{e}re equation 
\begin{equation} 
\label{ep-MA}
\det D^{2}\phi_{\ep} = e^{-V+V(\nab \phi_{\ep})+\ep q(\nab \phi_{\ep})+c_{\ep}}.
\end{equation}
Note that $c_{\ep} \to 0$ as $\ep \to 0$. Also,
since
$$
|c_{\ep}'| = \bigg{|}\frac{(e^{c_{\ep}})'}{e^{c_{\ep}}}\bigg{|} = \bigg{|}\int_{\R^{n}} -q(x)e^{-V(x)-\ep q(x)-c_{\ep}} \, dx\bigg{|} \leq \|q\|_{L^{\infty}(\R^{n})},
$$
$c_{\ep}$ is Lipschitz as a function of $\ep$ and
\begin{equation}
\label{c_ep-Lip}
\frac{|c_\ep|}{\ep} \leq  \|q\|_{L^{\infty}(\R^{n})}.
\end{equation}
In addition, by the dominated convergence theorem, 
\begin{equation}
\label{c_ep-Lip 0}
c_{\ep}' \to \iota_{q} := \int_{\R^{n}} - q(x)e^{-V(x)} \, dx\qquad\text{as $\ep \to 0$.}
\end{equation}
Without loss of generality, we assume that $\phi_{\ep}(0)=0$. 
Now, define
\begin{equation*}
\label{psi-eps} 
\psi_{\ep}(x) := \frac{\phi_{\ep}(x)-|x|^{2}/2}{\ep}.
\end{equation*} 
By \eqref{eqn:1-to-n} applied to $p = \ep q+c_{\ep}$ and \eqref{c_ep-Lip}, we see that if $\ep \leq \frac{1}{2\|q\|_{L^{\infty}(\R^{n})}}$, then
\begin{equation}
\label{Hess psi-ep bd} 
\|D^{2}\psi_{\ep}\|_{L^{\infty}(\R^{n})} \leq (C_0+1)\|q\|_{L^{\infty}(\R^{n})}.
\end{equation}
Recall that, for any $n \times n$ matrix $A$, there exists a $K > 0$, depending only on $\|A\|$, such that for all $\ep$ sufficiently small $|\log \det(\Id+\ep A) - \ep\trace A| \leq  \ep^{2}K$. Therefore, there exist an $\ep_{0}>0$ and a collection of functions $g_{\ep}$ with
\begin{equation}
\label{logdet-trace}
\sup_{\ep \leq \ep_{0}} \|g_{\ep}\|_{L^{\infty}(\R^{n})} < \infty 
\end{equation}
such that for all $\ep \leq \ep_{0}$,
\[ \ep\Delta \psi_{\ep}(x)+\ep^{2}g_{\ep}(x) =\log\det (\Id + \ep D^{2}\psi_\ep) = \log\det D^{2}\phi_{\ep}. \]
Thus, by \eqref{ep-MA} and our choice of $V$,
\begin{equation}
\label{eqn:FTC}
\begin{split}
\Delta \psi_{\ep}(x) + \ep g_{\ep}(x) 
&= \frac{V(\nab \phi_{\ep}(x)) - V(x) + \ep q(\nab \phi_{\ep}(x)) + c_{\ep}}{\ep} 
\\
&= \int_{0}^{1} \langle (1-t)\nab \phi_{\ep}(x) + tx, \nab \psi_{\ep}(x) \rangle \, dt + q(\nab \phi_{\ep}(x)) + \frac{c_{\ep}}{\ep}
\\
% &= \int_{0}^{1} \langle (1-t)(\nab \phi_{\ep}(x)-x) + x, \nab \psi_{\ep}(x) \rangle \, dt + q(\nab \phi_{\ep}(x)) + \frac{c_{\ep}}{\ep}
% \\
&= \langle x, \nabla \psi_{\ep}(x) \rangle + \frac{\ep}{2}|\nabla \psi_{\ep}(x)|^2 + q(\nab \phi_{\ep}(x)) + \frac{c_{\ep}}{\ep}.
\end{split}
\end{equation}

We claim that, up to a subsequence, there exists a function $\psi_{0} \in C^{1,1}_{\rm{loc}}(\R^{n})$ such that $\psi_{\ep} \to \psi_{0}$ in $C^{1}_{\rm{loc}}(\R^{n})$ and $D^{2}\psi_{\ep} \rightharpoonup D^{2}\psi_{0}$ weakly-$\ast$ in $L^{\infty}(\R^{n})$ as $\ep \to 0$.
To this end, by Arzel\`{a}-Ascoli, it suffices to show that $\psi_{\ep}$ are locally bounded in $C^{1,1}$. Since $\psi_{\ep}(0) = 0$, by \eqref{Hess psi-ep bd}, it is enough to prove that
\begin{equation}
\label{grad psi-ep bd} 
\liminf_{\ep \to 0} |\nab \psi_{\ep}(0)| < \infty.
\end{equation}
Assume, to the contrary, that $\lim_{\ep \to 0} |\nab \psi_{\ep}(0)| = \infty$. Notice that \eqref{eqn:FTC} implies that for all $\ep \leq \ep_0$ and $x\in \R^n$,
\[\begin{split}
\bigg{|} \int_{0}^{1} \langle & (1-t)\nab \phi_{\ep}(x) + tx, \nab \psi_{\ep}(0) \rangle \, dt \bigg{|} 
\\
&\leq \bigg{|} \int_{0}^{1} \langle (1-t)\nab \phi_{\ep}(x) + tx, \nab \psi_{\ep}(x) \rangle \, dt \bigg{|} + \big(|\nab \phi_{\ep}(x)| + |x| \big)| \nab \psi_{\ep}(0)-  \nab \psi_{\ep}(x)| 
\\
&\leq |\Delta \psi_{\ep}(x)| + \ep |g_{\ep}(x)| + |q(\nab \phi_{\ep}(x))| + \frac{|c_{\ep}|}{\ep} + \big{(}|\nab \phi_{\ep}(x)| + |x| \big{)} |x| \sup_{\ep\leq\ep_{0}} \|D^{2} \psi_{\ep}\|_{L^{\infty}(\R^{n})}.  
\end{split}\]
Let $\a_{\ep} = \nab\psi_{\ep}(0)/|\nab \psi_{\ep}(0)| \in \mathbb{S}^{n-1}$, and note that up to subsequences $\a_{\ep} \to\a_{0} \in \mathbb{S}^{n-1}$ as $\ep \to 0$. Furthermore, let $\eta \in C^{\infty}_{c}(B_{1/2}(\a_{0}))$ be a nonnegative function that integrates to one. Then, by \eqref{c_ep-Lip}, we deduce that 
\begin{equation}
\label{eqn:pippo}
\begin{split}
|\nab \psi_{\ep}(0)| & \int_{\R^{n}} \bigg{|}\int_{0}^{1} \langle (1-t)\nab \phi_{\ep}(x) + tx, \a_{\ep} \rangle \, dt \bigg{|}\eta(x) \, dx \\
&= \int_{\R^{n}} \bigg{|} \int_{0}^{1} \langle (1-t)\nab \phi_{\ep}(x) + tx, \nab \psi_{\ep}(0) \rangle \, dt \bigg{|} \eta(x) \, dx\\
&\leq \sup_{\ep \leq \ep_{0}} \ep\|g_{\ep}\|_{L^{\infty}(\R^{n})} + 2\|q\|_{L^\infty(\R^{n})} \\
&\hspace{2.0cm}+ \sup_{\ep \leq \ep_{0}} \|D^{2} \psi_{\ep}\|_{L^\infty(\R^{n})}\int_{\R^{n}} \big{(}|\nab \phi_{\ep}(x)||x| + |x|^{2} + 1\big{)}\eta(x) \, dx.
\end{split}
\end{equation}
Recall that $D^{2}\phi_{\ep}$ converges uniformly to the identity matrix by \eqref{eqn:1-to-n} applied to $\phi_{\ep}$ and $\ep q$. By the stability and uniqueness of optimal transports, $\nab \phi_{\ep}$ converges locally uniformly to the identity map as $\ep \to 0$. In particular,
$ |\nab \phi_{\ep}(x)| \leq  2$ for every $x \in B_{1/2}(\a_{0})$ and $\ep$ sufficiently small, and we obtain that
\[ \lim_{\ep \to 0}  \int_{\R^{n}} \bigg{|}\int_{0}^{1} \langle (1-t)\nab \phi_{\ep}(x) + t x, \a_{\ep} \rangle \, dt \bigg{|} \eta(x) \, dx = \int_{\R^{n}} \langle x, \a_{0} \rangle \eta(x) \, dx \geq \frac{1}{2} \]
by dominated convergence.
Thus, taking the limit in \eqref{eqn:pippo} and noticing that the right-hand side is bounded as $\ep \to 0$ thanks to \eqref{Hess psi-ep bd} and \eqref{logdet-trace},
we see that
\[ \infty = \lim_{\ep \to 0} |\nab \psi_{\ep}(0)|\bigg{|}\int_{0}^{1} \langle (1-t)\nab \phi_{\ep}(x) + tx, \a_{\ep} \rangle \, dt \bigg{|} 
<\infty, \]
which, being impossible, proves \eqref{grad psi-ep bd} and shows that $\psi_{\ep} \to \psi_{0}$ in $C^{1}_{\rm{loc}}(\R^{n})$ and $D^{2}\psi_{\ep} \rightharpoonup D^{2}\psi_{0}$ weakly-$\ast$ in $L^{\infty}(\R^{n})$ as $\ep \to 0$ for some function $\psi_{0} \in C^{1,1}_{\rm{loc}}(\R^{n})$.

Now, reformulating \eqref{eqn:FTC}, we see that for any $\eta \in C_{c}^{\infty}(\R^{n})$,
\begin{equation}
\label{ep-weak eqn}
\int_{\R^{n}} \Bigl(\Delta \psi_{\ep}(x) + \ep g_{\ep}(x) - q(\nab \phi_{\ep}(x)) - \frac{c_{\ep}}{\ep}\Bigr)\eta(x) \, dx = \int_{\R^{n}} \Bigl(\langle x, \nab \psi_{\ep}(x) \rangle + \frac{\ep}{2}|\nab \psi_{\ep}(x)|^{2}\Bigr)\eta(x) \, dx.
\end{equation}
Thus, recalling \eqref{c_ep-Lip 0} and that $q$ is continuous, we can pass to the limit and obtain that
\begin{equation*}
%\label{weak eqn}
\int_{\R^{n}} \bigl(\Delta \psi_{0}(x) - \langle x, \nab \psi_{0}(x) \rangle\bigr)\eta(x) \, dx = \int_{\R^{n}} \bigl(q(x)+\iota_{q}\bigr)\eta(x) \, dx
\end{equation*}
for all $\eta \in C^{\infty}_{c}(\R^{n})$.
Since $q$ was arbitrary, we have shown that for every {$q \in L^{\infty}(\R^{n}) \cap C^0(\R^n)$}, there exists a function $\psi_0 \in C^{1,1}_{\rm{loc}}(\R^{n})$ solution to
\begin{equation}
\label{elliptic eqn}
\Delta \psi_{0}(x) - \langle x , \nab \psi_{0}(x) \rangle = q(x) + \iota_{q}.
\end{equation}

We now show that this is impossible. Recall that there exists a bounded, continuous $g$ and $\psi \in C^{1,\a}_{\rm{loc}}(B_{2}) \cap C^{\infty}_{}(B_{2} \setminus \{0\})$, for any $\a \in (0,1)$, such that $\Delta \psi(x) = g(x)$ in $B_{2}$, yet $\psi \notin C^{1,1}(B_{2})$. In particular, $\lim_{x \to 0} |D^{2}\psi(x)| = \infty$. (See \cite[Chapter 3]{hl}.)
Define 
\[ h(x) := \begin{cases} g(x) - \langle x , \nab \psi(x) \rangle &x \in B_{1}\\
g(x/|x|) - \langle x/|x| , \nab \psi(x/|x|) \rangle &x \in \R^{n} \setminus B_{1},
\end{cases} \]
and observe that, since $\psi \in C^{1,\a}_{\rm{loc}}(B_{2})$ {and $g$ is bounded and continuous, $h \in L^{\infty}(\R^{n}) \cap C^0(\R^n)$}. By construction, there exists a $\psi_{0} \in C^{1,1}_{\rm{loc}}(\R^{n})$ that solves \eqref{elliptic eqn} with $q = h$. Then, for $\psi_{1} := \psi_{0} - \psi$ we have that $\Delta \psi_{1}(x) - \langle x , \nab \psi_{1}(x) \rangle = \iota_{h}$ in $B_{1}$. Thus, $\psi_{1} \in C^{\infty}(B_{1})$ by elliptic regularity, a contradiction since $\psi \notin C^{1,1}_{\rm{loc}}(B_{1})$ and $\psi_{0} \in C^{1,1}(B_{1})$.
\end{proof}

%\section{Applications}\label{sec:appl}

\end{document}